\definecolor{verylight}{gray}{0.97}
\definecolor{light}{gray}{0.9}
\definecolor{medium}{gray}{0.85}
\definecolor{dark}{gray}{0.6}
\def\frk{\frak}               
\def\Phi{{\frk n}}
\def\Phi{{\frk N}}
\def\opn#1#2{\def#1{\operatorname{#2}}} 
\opn\chara{char} \opn\length{\ell} \opn\pd{pd} \opn\rk{rk}
\opn\projdim{proj\,dim} \opn\injdim{inj\,dim} \opn\rank{rank}
\opn\depth{depth} \opn\grade{grade} \opn\height{height}
\opn\embdim{emb\,dim} \opn\codim{codim}
\opn\Tr{Tr} \opn\bigrank{big\,rank}
\opn\superheight{superheight}\opn\lcm{lcm}
\opn\trdeg{tr\,deg}
\opn\reg{reg} \opn\lreg{lreg} \opn\ini{in} \opn\lpd{lpd}
\opn\size{size}\opn\bigsize{bigsize}
\opn\cosize{cosize}\opn\bigcosize{bigcosize}
\opn\sdepth{sdepth}\opn\sreg{sreg}
\opn\link{link}\opn\fdepth{fdepth} \opn\trdeg{trdeg} \opn\mod{mod}
\opn\div{div} \opn\Div{Div} \opn\cl{cl} \opn\Cl{Cl}
\opn\Spec{Spec} \opn\Supp{Supp} \opn\supp{supp} \opn\Sing{Sing}
\opn\Ass{Ass} \opn\Min{Min}\opn\Mon{Mon} \opn\dstab{dstab} \opn\astab{astab}
\opn\Syz{Syz}
\opn\Ann{Ann} \opn\Rad{Rad} \opn\Soc{Soc}
\opn\Im{Im} \opn\Ker{Ker} \opn\Coker{Coker} \opn\Am{Am}
\opn\Hom{Hom} \opn\Tor{Tor} \opn\Ext{Ext} \opn\End{End}
\opn\Aut{Aut} \opn\id{id}
\opn\nat{nat}
\opn\pff{pf}
\opn\Pf{Pf} \opn\GL{GL} \opn\SL{SL} \opn\mod{mod} \opn\ord{ord}
\opn\Gin{Gin} \opn\Hilb{Hilb}\opn\sort{sort}
\opn\S{S} \opn\dim{dim} \opn\supp{supp}\opn\trdeg{trdeg}\opn\sort{sort}
\opn\aff{aff} \opn\con{conv} \opn\relint{relint} \opn\st{st}
\opn\lk{lk} \opn\cn{cn} \opn\core{core} \opn\vol{vol}
\opn\link{link} \opn\star{star}\opn\lex{lex}
\opn\conv{conv} \opn\Ehr{Ehr}
\opn\gr{gr}
\def\pot#1#2{#1[\kern-0.28ex[#2]\kern-0.28ex]}
\opn\dirlim{\underrightarrow{\lim}}
\opn\inivlim{\underleftarrow{\lim}}
\def\Implies{\ifmmode\Longrightarrow \else
        \unskip${}\Longrightarrow{}$\ignorespaces\fi}
\def\implies{\ifmmode\Rightarrow \else
        \unskip${}\Rightarrow{}$\ignorespaces\fi}
\def\iff{\ifmmode\Longleftrightarrow \else
        \unskip${}\Longleftrightarrow{}$\ignorespaces\fi}
\newtheorem{Theorem}{Theorem}[section]
 \newtheorem{Corollary}[Theorem]{Corollary}
 \newtheorem{Proposition}[Theorem]{Proposition}
 \newtheorem{Remark}[Theorem]{Remark}
 \newtheorem{Example}[Theorem]{Example}
 \newtheorem{Definition}[Theorem]{Definition}
\let\epsilon\varepsilon
\let\kappa=\varkappa
\def\qed{\ifhmode\textqed\fi
      \ifmmode\ifinner\quad\qedsymbol\else\dispqed\fi\fi}
\def\textqed{\unskip\nobreak\penalty50
       \hskip2em\hbox{}\nobreak\hfil\qedsymbol
       \parfillskip=0pt \finalhyphendemerits=0}
\def\dispqed{\rlap{\qquad\qedsymbol}}
\opn\dis{dis}
\def\pnt{{\raise0.5mm\hbox{\large\bf.}}}
\opn\Lex{Lex}
\begin{document}
 \title {Gorenstein $T$-spread Veronese algebras}

 \author {Rodica Dinu}

\address{Faculty of Mathematics and Computer Science, University of Bucharest, Str. Academiei 14, 010014 Bucharest, Romania}
\email{rdinu@fmi.unibuc.ro}


 \begin{abstract}
In this paper we characterize the Gorenstein t-spread Veronese algebras.
 \end{abstract}

\thanks{}
\subjclass[2010]{13H10, 05E40}
\keywords{t-spread Veronese algebras, Ehrhart rings, Gorenstein rings}

 \maketitle

\section*{Introduction}
Let $K$ be a field and let $S=K[x_1, x_2, \dots, x_n]$ be the polynomial ring in $n$ indeterminates over $K$. In the paper \cite{ehq}, Ene, Herzog and Qureshi introduced the concept of $t$-spread monomials. We fix integers $d$ and $t$.
A monomial $x_{i_1}x_{i_2}\cdots x_{i_d}$ with $i_1\leq i_2\leq\dots \leq i_d$ is $t$-spread if $i_j-i_{j-1}\geq t$, for any $2\leq j\leq n$.
Thus any monomial is $0$-spread and a squarefree monomial is $1$-spread.
A monomial ideal in $S$ is called a $t$-spread monomial ideal if it is generated by $t$-spread monomials.
For example, $I=(x_1x_3x_7, x_1x_4x_7, x_1x_5x_8)\subset K[x_1, x_2,\dots, x_8]$ is a $2$-spread monomial ideal.\\
Let $d\geq 1$ be an integer. A monomial ideal in $S$ is called a $t$-spread Veronese ideal of degree $d$ if it is generated by all $t$-spread monomials of degree $d$. We denote it by $I_{n,d,t}$. Note that $I_{n,d,t}\neq 0$ if and only if $n>t(d-1)$.
For example, if $n=5, d=2$ and $t=2$, then
$$
I_{5,2,2}=(x_1x_3, x_1x_4, x_1x_5, x_2x_4, x_2x_5, x_3x_5)\subset K[x_1, x_2, \dots, x_5].
$$
We consider the toric algebra generated by the monomials $v$ with $v\in G(I_{n,d,t})$, here, for a monomial ideal $I$, $G(I)$ denote the minimal system of monomial generators of $I$. This is called a $t$-spread Veronese algebra and we denote it by $K[I_{n,d,t}]$. It generalizes the classical (squarefree)Veronese algebras. By \cite[Corollary 3.4]{ehq}, the $t$-spread Veronese algebra is a Cohen-Macaulay domain.  \\
We fix an integer $d$ and a sequence ${\bf a}=(a_1, \dots, a_n)$ of integers with $1\leq a_1\leq a_2\leq\dots\leq a_n\leq d$ and $d=\sum_{i=1}^n a_i$. The $K$-subalgebra of $S=K[x_1, x_2, \dots, x_n]$ generated by all monomials of the form $t_1^{c_1}t_2^{c_2}\cdots t_n^{c_n}$ with $\sum_{i=1}^n c_i =d$ and $c_i\leq a_i$ for each $1\leq i \leq n$ is called an algebra of Veronese type and it is denoted by $A({\bf a},d)$. If each $a_i=1$, then $A({\bf 1},d)$ is generated by all the squarefree monomials of degree $d$ in $S$.\\
 \indent By \cite[Theorem 2.4]{mainpaper}, in the squarefree case, the algebra of Veronese type $A({\bf 1},d)$ is Gorenstein if and only if
\begin{itemize}
\item [{\em (i)}] $d=n$.
\item [{\em (ii)}] $d=n-1$.
\item [{\em (iii)}] $d<n-1, n=2d$.
\end{itemize}
The aim of this paper is to characterize the toric algebras $K[I_{n,d,t}]$ which have the Gorenstein property. Our approach is rather geometric. We identify the $t$-spread Veronese algebra, $K[I_{n,d,t}]$, with the Ehrhart ring $\mathcal{A}(\mathcal{P})$ associated to a suitable polytope $\mathcal{P}$, and next we employ the conditions which caracterize the Gorenstein property of $\mathcal{A}(\mathcal{P})$. The main result of this paper, Theorem \ref{mainresult}, classifies the $t$-spread Veronese algebras which are Gorenstein. Namely, we show that, for $d,t \geq 2$, $K[I_{n,d,t}]$ is Gorenstein if and only if $n\in \{(d-1)t+1, (d-1)t+2, dt, dt+1, dt+d\}$. We illustrate all our results with suitable examples.

\section{Ehrhart ring of a rational convex polytope}

\indent Let $\mathcal{P}\subset \mathbb{R}^N$ be a convex polytope of dimension $d$ and let $\partial \mathcal{P}$ be the boundary of $\mathcal{P}$. Then $\mathcal{P}$ is called {\it of standard type} if $d=N$ and the origin of $\mathbb{R}^N$ is contained in the interior of $\mathcal{P}$. We call a polytope $\mathcal{P}$ {\it rational} if every vertex of $\mathcal{P}$ has rational coordinates and {\it integral} if every vertex of $\mathcal{P}$ has integral coordinates. The {\it Ehrhart ring} of $\mathcal{P}$ is $\mathcal{A}(\mathcal{P})= \bigoplus_{n\geq 0} \mathcal{A}(\mathcal{P})_n$, where $\mathcal{A}(\mathcal{P})_n$ is the $K$-vector space generated by the monomials $\{x^{a}y^{n}: a\in n\mathcal{P}\cap \mathbb{Z}^n\}$. Here $n\mathcal{P}$ denotes the dilated polytope  $\{(na_1, na_2,\dots, na_d): (a_1, a_2, \dots, a_d)\in \mathcal{P}\}$. It is known that $\mathcal{A}(\mathcal{P})$ is a finitely generated $K$-algebra and a normal domain (\cite[Theorem 9.3.6]{monalg}). The reader can find more about Ehrhart rings of rational convex polytopes in \cite{beck} and \cite{monalg}.

\indent Let $\mathcal{P}\subset \mathbb{R}^d$ be a $d$-dimensional convex polytope of standard type. Then the {\it dual polytope of $\mathcal{P}$} is
\begin{center}
$\mathcal{P}^{*}=\{(\alpha_1,\dots, \alpha_d)\in \mathbb{R}^d: \sum_{i=1}^{d}\alpha_{i}\beta_{i}\leq 1,$ for all $(\beta_{1},\dots, \beta_{d})\in \mathcal{P}\}.$
\end{center}
One can check that $\mathcal{P}^{*}$ is a convex polytope of standard type and $(\mathcal{P}^{*})^{*}=\mathcal{P}$;(see \cite[Exercise 1.14]{brunsg} or \cite[Chapter 2]{ziegler}). It is known the fact that if $(\alpha_1,\dots, \alpha_d)\in \mathbb{R}^d$ and if $H\subset \mathbb{R}^d$ is the hyperplane defined by the equation $\sum_{i=1}^d \alpha_i x_i =1$, then $(\alpha_1, \dots, \alpha_d)$ is a vertex of $\mathcal{P}^{*}$ if and only if $H\cap \mathcal{P}$ is a facet of $\mathcal{P}$, see \cite[Chapter 2]{ziegler}. Therefore, the dual polytope of a rational convex polytope is rational. In order to classify the $t$-spread Veronese algebras which are Gorenstein, we will show that a $t$-spread Veronese algebra coincides with the Ehrhart ring of an integral convex polytope, so we need a criterion for the Ehrhart ring $\mathcal{A}(\mathcal{P})$ to be Gorenstein.\\
Let $\mathcal{P}$ be an integral polytope in $\mathbb{R}^{d}_{+}$ of $\dim \mathcal{P}=d$. We consider the toric ring $K[\mathcal{P}]$ which is generated by all the monomials $x_1^{a_1}\dots x_n^{a_n}s^q$ with $a=(a_1, a_2, \dots, a_n)\in \mathcal{P}\cap \mathbb{Z}^n$ and $q=a_1+a_2+\dots+a_n$. It is known that if $K[\mathcal{P}]$ is a normal ring, then $K[\mathcal{P}]$ is Cohen-Macaulay (\cite[Theorem 6.3.5]{brunsh}).

 \begin{Theorem}(Stanley, Danilov \cite{stanley}, \cite{danilov}) \label{standan} Let $\mathcal{P}\subset \mathbb{R}_{+}^{d}$ be an integral convex polytope and suppose that its toric ring $K[\mathcal{P}]$ is normal, thus $K[\mathcal{P}]=\mathcal{A}(\mathcal{P})$. Then the canonical module $\Omega (K[\mathcal{P}])$ of $K[\mathcal{P}]$ coincides with the ideal of $K[\mathcal{P}]$ which is generated by those monomials $x^{a}s^{q}$ with $a\in q(\mathcal{P}-\partial \mathcal{P})\cap \mathbb{Z}^d$.
\end{Theorem}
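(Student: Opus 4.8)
The plan is to realize $K[\mathcal{P}]$ as a normal affine semigroup ring and to read off its canonical module from a combinatorial computation of its top local cohomology via graded local duality. Since by hypothesis $K[\mathcal{P}]=\mathcal{A}(\mathcal{P})$, I would first write $R:=K[\mathcal{P}]$ as the semigroup ring $K[C]$ of
\[
C=\{(a,q)\in\mathbb{Z}^{d}\times\mathbb{Z}_{\geq 0}\ :\ q\geq 0,\ a\in q\mathcal{P}\cap\mathbb{Z}^{d}\},
\]
the set of lattice points of the $(d+1)$-dimensional cone $\sigma=\mathbb{R}_{\geq 0}(\mathcal{P}\times\{1\})\subset\mathbb{R}^{d+1}$, where the exponent $q$ of $s$ plays the role of the height coordinate and $\mathcal{P}$ sits at height $1$. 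The normality hypothesis says exactly that $C=\operatorname{gp}(C)\cap\sigma$, so $C$ is a normal affine semigroup, and by the normality criterion quoted above (\cite[Theorem 6.3.5]{brunsh}) the ring $R$ is Cohen--Macaulay of Krull dimension $d+1=\dim\sigma$.

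Next I would reduce the computation of $\Omega(R)$ to local cohomology. The ring $R$ carries the fine $\mathbb{Z}^{d+1}$-grading in which $x^{a}s^{q}$ has degree $(a,q)$, with irrelevant maximal ideal $\mathfrak{m}=(x^{a}s^{q}:(a,q)\in C\setminus\{0\})$. As $R$ is Cohen--Macaulay of dimension $d+1$, graded local duality provides a degree-reversing isomorphism
\[
\Omega(R)\ \cong\ {}^{*}\!\Hom_{K}\bigl(H^{d+1}_{\mathfrak{m}}(R),K\bigr),\qquad \Omega(R)_{c}\cong\Hom_{K}\bigl(H^{d+1}_{\mathfrak{m}}(R)_{-c},K\bigr).
\]
It therefore suffices to prove that $H^{d+1}_{\mathfrak{m}}(R)_{-c}\cong K$ precisely when $c$ is a lattice point in the relative interior of $\sigma$, and is $0$ otherwise.

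This combinatorial computation is the heart of the matter, and the step I expect to be the main obstacle. I would compute $H^{\bullet}_{\mathfrak{m}}(R)$ from the Ishida complex $\mathcal{L}^{\bullet}$, whose term $\mathcal{L}^{i}=\bigoplus_{\dim F=i}R_{F}$ runs over the faces $F$ of $\sigma$, with $R_{F}$ the localization of $R$ inverting the monomials supported on $F$ and the differentials given by signed localization maps; it is standard that $H^{i}_{\mathfrak{m}}(R)=H^{i}(\mathcal{L}^{\bullet})$. Fixing a multidegree $b\in\operatorname{gp}(C)$, the strand $(\mathcal{L}^{\bullet})_{b}$ is a complex of $K$-vector spaces in which, by normality, $(R_{F})_{b}=K$ exactly when $b\in\sigma+\operatorname{lin}(F)$ and $(R_{F})_{b}=0$ otherwise; its cohomology computes the reduced cohomology of the subcomplex of faces $F$ that ``see'' $b$. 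A direct inspection of this subcomplex shows that its top cohomology is one-dimensional exactly when $-b\in\relint(\sigma)$ and vanishes otherwise. Since $R$ is Cohen--Macaulay only $H^{d+1}_{\mathfrak{m}}(R)$ survives, so $H^{d+1}_{\mathfrak{m}}(R)_{b}\cong K$ iff $-b\in\relint(\sigma)\cap\operatorname{gp}(C)$; equivalently $H^{d+1}_{\mathfrak{m}}(R)_{-c}\cong K$ iff $c\in\relint(\sigma)\cap\operatorname{gp}(C)$. (From Danilov's sheaf-theoretic viewpoint the same conclusion arises because the canonical class of the affine toric variety $\operatorname{Spec}R$ is $-\sum_{\rho}D_{\rho}$ over the rays $\rho$, whose sections are the monomials $x^{c}$ with $\langle c,u_{\rho}\rangle\geq 1$ for every primitive inner facet normal $u_{\rho}$ --- again the strictly interior lattice points.)

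Finally I would translate this back into the language of the polytope. By the local duality isomorphism the previous step gives $\Omega(R)_{c}\cong K$ exactly for $c\in\relint(\sigma)\cap\operatorname{gp}(C)$, and by normality each such interior lattice point already lies in $C$; hence $\Omega(R)=\bigoplus_{c\in\relint(\sigma)\cap C}Kx^{c}$, which is a genuine monomial ideal of $R$ because adding an element of $\sigma$ to an interior point keeps it interior, so the space is closed under multiplication by $R$. Slicing $\sigma=\mathbb{R}_{\geq 0}(\mathcal{P}\times\{1\})$ at height $q$ identifies the relative interior of $\sigma$ in the $q$-th level with $q\,\operatorname{int}(\mathcal{P})=q(\mathcal{P}-\partial\mathcal{P})$. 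Thus the lattice points $c=(a,q)\in\relint(\sigma)\cap C$ are exactly those with $a\in q(\mathcal{P}-\partial\mathcal{P})\cap\mathbb{Z}^{d}$, and the corresponding monomials $x^{a}s^{q}$ generate $\Omega(R)$ --- which is the assertion.
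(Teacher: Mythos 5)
This statement is not proved in the paper at all: it is the classical Danilov--Stanley description of the canonical module, quoted with attribution to \cite{stanley} and \cite{danilov}, so there is no in-paper argument to compare against. Your proposal is a correct outline of the standard proof of that classical result, essentially the one in Bruns--Herzog, Chapter 6: realize $K[\mathcal{P}]$ as the normal affine semigroup ring of the cone $\sigma$ over $\mathcal{P}\times\{1\}$, get Cohen--Macaulayness from Hochster, compute $H^{d+1}_{\mathfrak{m}}$ degree by degree from the Ishida complex, and dualize. The one step you flag as the ``main obstacle'' is indeed where the work lives, but for the \emph{top} cohomology it is lighter than you suggest: $H^{d+1}$ of the strand at degree $b$ is just the cokernel of $\bigoplus_{F\ \mathrm{facet}}(R_F)_b\to K$, and since $(R_F)_b\neq 0$ exactly when $\langle u_F,b\rangle\geq 0$ for the inner facet normal $u_F$, the cokernel is $K$ precisely when $\langle u_F,b\rangle<0$ for every facet, i.e.\ $-b\in\relint(\sigma)$; vanishing of the lower $H^i$ comes for free from Cohen--Macaulayness, as you note. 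Two small points worth making explicit: the hypothesis $K[\mathcal{P}]=\mathcal{A}(\mathcal{P})$ is what guarantees that the semigroup is \emph{all} of $\sigma\cap\mathbb{Z}^{d+1}$ (not merely normal with respect to its own group $\operatorname{gp}(C)$), which is needed so that the interior lattice points appearing in the answer are all lattice points of $q(\mathcal{P}-\partial\mathcal{P})$ and not just those in $\operatorname{gp}(C)$; and since $\mathcal{P}$ need not be full-dimensional in $\mathbb{R}^d$ (in this paper it sits in the hyperplane $\sum a_i=d$), the boundary $\partial\mathcal{P}$ and the interior must be taken relative to the affine span, exactly as your use of $\relint(\sigma)$ does. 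With those caveats the argument is sound.
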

By \cite[Proposition A.6.6]{monomialideals}, the Cohen-Macaulay type of a Cohen-Macaulay graded $S$-module $M$ of dimension $d$ coincides with $\beta_{n-d}^{S}(M)$. In particular, a Cohen-Macaulay ring $R=S/I$ is Gorenstein if and only if $\beta_{n-d}^{S}(R)=1$. Let $\mathcal{P}$ be a polytope as in Theorem \ref{standan} and $\delta\geq 1$ be the smallest integer such that $\delta(\mathcal{P}-\partial \mathcal{P})\cap \mathbb{Z}^d \neq \emptyset$. Then, by \cite{noma}, the $\textbf {a}$-invariant is
\begin{center} $\textbf{a}(K[\mathcal{P}])=-\min{(\omega_{K[\mathcal{P}]})\neq 0}=-\delta$.
\end{center}
\begin{Remark}\label{unique}
 By \cite[Corollary A.6.7]{monomialideals}, $K[\mathcal{P}]$ is Gorenstein if and only if $\Omega(K[\mathcal{P}])$ is a principal ideal. In particular, if $K[\mathcal{P}]$ is Gorenstein, then $\delta(\mathcal{P}-\partial \mathcal{P})$ must posses a unique interior vector.
 \end{Remark}
 By \cite[Theorem 1.1]{hibi2}, under the hypothesis that $\mathcal{P}$ is an integral convex polytope, we have the following result:
\begin{Theorem} ({\cite[Theorem 1.1]{hibi2}}) \label{stdtype}
Let $\mathcal{P}$ be a integral convex polytope of dimension $d$ and let $\delta \geq 1$ be the smallest integer for which $\delta(\mathcal{P}-\partial \mathcal{P})\cap \mathbb{Z}^d \neq \emptyset$. Fix $\alpha \in \delta(\mathcal{P}-\partial \mathcal{P})\cap \mathbb{Z}^d \neq \emptyset$ and denote by $\mathcal{Q}$ the rational convex polytope of standard type $\mathcal{Q}=\delta \mathcal{P}-\alpha \subset \mathbb{R}^d$. Then the Ehrhart ring of $\mathcal{P}$ is Gorenstein if and only if the dual polytope $\mathcal{Q}^{*}$ of $\mathcal{Q}$ is integral.
\end{Theorem}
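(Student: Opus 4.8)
The plan is to reduce the Gorenstein property of $K[\mathcal{P}]$ to a combinatorial condition on the lattice points of the dilates $q\mathcal{P}$, and then to recognise that condition as the integrality of $\mathcal{Q}^{*}$.

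First I would work with the canonical module. By Remark~\ref{unique}, $K[\mathcal{P}]$ is Gorenstein if and only if $\Omega(K[\mathcal{P}])$ is principal, and by Theorem~\ref{standan} this ideal is graded by the $s$-degree $q$, its $q$-th component being spanned by the monomials $x^{b}s^{q}$ with $b\in q(\mathcal{P}-\partial\mathcal{P})\cap\mathbb{Z}^{d}$; the least $q$ with a nonzero component is $\delta$. If $\Omega(K[\mathcal{P}])$ is principal its generator must sit in the minimal degree $\delta$, which forces a \emph{unique} interior lattice point $\alpha$ at level $\delta$ together with $\Omega(K[\mathcal{P}])=(x^{\alpha}s^{\delta})$. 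Thus $K[\mathcal{P}]$ is Gorenstein precisely when $\alpha$ is the unique interior lattice point of $\delta\mathcal{P}$ and, for every $q\geq\delta$ and every $b\in q(\mathcal{P}-\partial\mathcal{P})\cap\mathbb{Z}^{d}$, one has $b-\alpha\in(q-\delta)\mathcal{P}$.

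Next I would pass to the irredundant facet presentation $\mathcal{P}=\{x:\langle c_{i},x\rangle\leq h_{i},\ 1\leq i\leq m\}$, where the $c_{i}\in\mathbb{Z}^{d}$ are the primitive inner facet normals and $h_{i}\in\mathbb{Z}$. Because the $c_{i}$ and $h_{i}$ are integral, for $b\in\mathbb{Z}^{d}$ one has $b\in q(\mathcal{P}-\partial\mathcal{P})$ if and only if $\langle c_{i},b\rangle\leq qh_{i}-1$ for all $i$. Putting $k_{i}:=\delta h_{i}-\langle c_{i},\alpha\rangle$, which is a positive integer since $\alpha$ is interior, a short computation gives $b-\alpha\in(q-\delta)\mathcal{P}\iff\langle c_{i},b\rangle\leq qh_{i}-k_{i}$ for all $i$. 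Hence the Gorenstein condition above is equivalent to the statement that no facet index $i$ with $k_{i}\geq2$ carries an interior lattice point $b$ (at any level $q$) with $\langle c_{i},b\rangle=qh_{i}-1$. On the dual side, $\mathcal{Q}=\delta\mathcal{P}-\alpha=\{y:\langle c_{i},y\rangle\leq k_{i}\}$, so the facet hyperplanes of $\mathcal{Q}$ are $\langle c_{i}/k_{i},y\rangle=1$ and, by the vertex–facet correspondence recalled before the theorem, the vertices of $\mathcal{Q}^{*}$ are exactly the vectors $c_{i}/k_{i}$; as each $c_{i}$ is primitive, $\mathcal{Q}^{*}$ is integral if and only if $k_{i}=1$ for every $i$. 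It therefore suffices to prove that the combinatorial condition holds if and only if all $k_{i}=1$.

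If all $k_{i}=1$ the two systems of inequalities coincide, the condition is automatic, and taking $q=\delta$ recovers the uniqueness of $\alpha$; this direction is immediate. The hard part is the converse: assuming some $k_{i_{0}}\geq2$, I must construct a violating interior lattice point. For this I would fix a rational point $w$ in the relative interior of the facet $\langle c_{i_{0}},x\rangle=h_{i_{0}}$, so that $\langle c_{i},w\rangle<h_{i}$ for all $i\neq i_{0}$, and then, for large $q$, search the hyperplane $\langle c_{i_{0}},x\rangle=qh_{i_{0}}-1$, which contains lattice points because $c_{i_{0}}$ is primitive. Since the slack $q(h_{i}-\langle c_{i},w\rangle)$ at every other facet grows linearly in $q$, a lattice point $b$ on that hyperplane lying within bounded distance of $qw$ will satisfy $\langle c_{i},b\rangle\leq qh_{i}-1$ for all $i\neq i_{0}$ once $q$ is large enough; such a $b$ is interior to $q\mathcal{P}$ yet has $\langle c_{i_{0}},b\rangle=qh_{i_{0}}-1>qh_{i_{0}}-k_{i_{0}}$, so $b-\alpha\notin(q-\delta)\mathcal{P}$ and $x^{b}s^{q}\notin(x^{\alpha}s^{\delta})$, showing $\Omega(K[\mathcal{P}])$ is not principal. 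Making this lattice-approximation step precise—guaranteeing a point of the affine hyperplane close enough to $qw$ to clear the remaining facet inequalities—is the one genuinely technical point; everything else is bookkeeping. Combining the steps yields $K[\mathcal{P}]$ Gorenstein $\iff$ all $k_{i}=1\iff\mathcal{Q}^{*}$ integral, as claimed.
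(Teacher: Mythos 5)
This is a quoted result: the paper itself gives no proof of Theorem~\ref{stdtype}, deferring to the combinatorial proof in \cite{hibi2} and the algebraic proof in \cite{noma}. So there is nothing in the paper to compare against line by line; what I can say is that your argument is correct and follows essentially the standard algebraic route (the one in \cite{noma}): Stanley--Danilov (Theorem~\ref{standan}) plus Remark~\ref{unique} reduce Gorensteinness to the canonical module being generated by $x^{\alpha}s^{\delta}$, the irredundant facet presentation $\langle c_i,x\rangle\leq h_i$ with primitive integral $c_i$ turns this into the condition $k_i:=\delta h_i-\langle c_i,\alpha\rangle=1$ for all $i$, and the vertex--facet duality recalled before the theorem identifies the vertices of $\mathcal{Q}^{*}$ with the points $c_i/k_i$, which are integral exactly when all $k_i=1$ because each $c_i$ is primitive.

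Two small points. First, the step you flag as technical is genuinely fine and worth writing out: since $c_{i_0}$ is primitive, for every integer $m$ the set $\{b\in\mathbb{Z}^d:\langle c_{i_0},b\rangle=m\}$ is a nonempty translate of the fixed $(d-1)$-dimensional lattice $c_{i_0}^{\perp}\cap\mathbb{Z}^d$, whose covering radius is a constant independent of $m$; so for $q\gg0$ there is a lattice point $b$ on $\langle c_{i_0},x\rangle=qh_{i_0}-1$ within bounded distance of $qw$, and the linearly growing slacks $q(h_i-\langle c_i,w\rangle)$ at the other facets absorb that bounded error. Second, your intermediate paraphrase (``no facet with $k_i\geq2$ carries an interior lattice point with $\langle c_i,b\rangle=qh_i-1$'') is stated as an equivalence but is really only the implication you need; the violation could a priori occur at $qh_i-j$ for any $1\leq j<k_i$. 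Since your construction produces a violation at $qh_{i_0}-1$ whenever some $k_{i_0}\geq2$, the logic of the proof is unaffected, but the phrase ``equivalent to'' should be softened. (Also, you call the $c_i$ inner normals while writing $\langle c_i,x\rangle\leq h_i$; this is a harmless sign-convention slip.)
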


A combinatorial proof of the Theorem \ref{stdtype} can be found in \cite{hibi2} and an algebraic proof of the same theorem can be found in \cite{noma}.

\section{t-spread Veronese algebras}
\indent Let $\mathcal{M}_{n,d,t}$ be the set of $t$-spread monomials of degree $d$ in $n$ variables.\\
\indent To begin with, we study when the $t$-spread Veronese algebra, $K[I_{n,d,t}]$, is a polynomial ring. If $t=1$, then the $t$-spread Veronese algebra coincides with the classical squarefree Veronese algebra and these which are Gorestein are studied in the paper \cite{mainpaper}. Assume $t\geq 2.$ If $n=(d-1)t+1$, then $K[I_{n,d,t}]$ has only one generator, thus it is Gorenstein. Therefore, in what follows, we always consider $t\geq 2$ and $n\geq (d-1)t+2$.
In order to study when the $t$-spread Veronese algebra $K[I_{n,d,t}]$ is a polynomial ring, we need to study sorted sets of monomials, a concept introduced by Sturmfels (\cite{sturmfels}). Let $S_d$ be the $K$-vector space generated by the monomials of degree $d$ in $S$ and let $u, v\in S_d$ be two monomials. We write $uv=x_{i_1}x_{i_2} \dots x_{i_{2d}}$ with $1\leq i_1\leq i_2\leq \dots \leq i_{2d}\leq n$ and define
\begin{center}
$u^{\prime}=x_{i_1}x_{i_3}\dots x_{i_{2d-1}}, v^{\prime}= x_{i_2}x_{i_4}\dots x_{i_{2d}}.$
\end{center}
The pair $(u^{\prime},v^{\prime})$ is called the {\it sorting} of $(u,v)$ and the map
\begin{center}
$\sort: S_d\times S_d \rightarrow S_d\times S_d, (u,v)\mapsto (u^{\prime}, v^{\prime})$
\end{center}
is called the {\it sorting operator}. A pair $(u,v)$ is {\it sorted} if $\sort(u,v)=(u,v)$. For example, $(x_1^2x_2x_3, x_1x_2x_3^2)$ is a sorted pair. Notice that if $(u,v)$ is sorted, then $u>_{lex}v$ and $\sort(u,v)=\sort(v,u)$. If $u_1=x_{i_1}\dots x_{i_d}, u_2=x_{j_1}\dots x_{j_d}, \dots, u_{r}=x_{l_1}\dots x_{l_d}$, then the $r$-tuple $(u_1, \dots, u_r)$ is sorted if and only if
\begin{center}
$i_1\leq j_1\leq  \dots\leq  l_1\leq i_2\leq j_2\leq \dots \leq l_2\leq \dots \leq i_d\leq j_d\leq \dots \leq l_d,$
\end{center}
which is equivalent to $(u_i, u_j)$ sorted, for all $i>j$.

\begin{Proposition}\label{sorted}
Let $u_1,\dots,u_q$ be the generators of $K[I_{n,d,t}]$. If $n=(d-1)t+2$, then any $r-$tuple $(u_1,\dots,u_r)$ with $u_1\geq_{lex} u_2\geq_{lex}\cdots \geq_{lex} u_r$ is sorted.
\end{Proposition}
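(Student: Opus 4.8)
The plan is to describe all generators of $K[I_{n,d,t}]$ explicitly when $n=(d-1)t+2$, to read off the lexicographic order among them, and then to verify directly the chain of inequalities characterizing a sorted tuple. First I would put the generators into a normal form. Writing a generator as $x_{i_1}\cdots x_{i_d}$ with $i_1<\cdots<i_d$ and $i_p-i_{p-1}\ge t$, the shift $j_p:=i_p-(p-1)t$ turns the $t$-spread condition into $1\le j_1\le j_2\le\cdots\le j_d$, while $i_d\le n=(d-1)t+2$ becomes $j_d\le 2$. Hence every $j_p\in\{1,2\}$, and since the $j_p$ are non-decreasing there is a threshold $s\in\{0,1,\dots,d\}$ with $j_p=1$ for $p\le s$ and $j_p=2$ for $p>s$. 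Denoting the corresponding monomial by $M_s$, this already shows there are exactly $d+1$ generators, with
\[
i_p(M_s)=1+(p-1)t+\varepsilon_s(p),\qquad
\varepsilon_s(p)=\begin{cases}0,& p\le s,\\ 1,& p>s;\end{cases}
\]
that is, $M_s$ is the tightest monomial $x_1x_{1+t}\cdots x_{1+(d-1)t}$ with the single available unit of slack inserted so that precisely the indices in positions $s+1,\dots,d$ are raised by one.

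Next I would determine the lexicographic order. Since the summand $(p-1)t$ is common to all generators at position $p$, comparing $M_s$ and $M_{s'}$ lexicographically reduces to comparing the $j$-sequences $1^{s}2^{d-s}$ and $1^{s'}2^{d-s'}$ (meaning $s$ ones followed by $d-s$ twos); for $s>s'$ these first differ at position $s'+1$, where $M_s$ carries the smaller index. Thus $M_d>_{lex}M_{d-1}>_{lex}\cdots>_{lex}M_0$, so any $r$-tuple with $u_1\ge_{lex}\cdots\ge_{lex}u_r$ has the form $(M_{s_1},\dots,M_{s_r})$ with $s_1\ge s_2\ge\cdots\ge s_r$.

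Finally I would check the two families of inequalities in the criterion recalled above. Within a fixed position $p$, the index $i_p(M_{s_k})=1+(p-1)t+\varepsilon_{s_k}(p)$ is non-decreasing in $k$, because $\varepsilon_s(p)$ is non-increasing in $s$ and $s_1\ge\cdots\ge s_r$; this yields $i_p(M_{s_1})\le\cdots\le i_p(M_{s_r})$. Across consecutive positions one needs $i_p(M_{s_r})\le i_{p+1}(M_{s_1})$, and here $i_p(M_{s_r})\le 2+(p-1)t$ while $i_{p+1}(M_{s_1})\ge 1+pt$, so the claim follows from $2+(p-1)t\le 1+pt$, i.e. from $t\ge 1$. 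Both inequalities hold, so the full chain is satisfied and the tuple is sorted.

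The verification is short; the real content lies in the first step, where the hypothesis $n=(d-1)t+2$ is used to force every generator into the rigid two-value shape $1^{s}2^{d-s}$ after the shift $j_p=i_p-(p-1)t$. I expect the only points demanding care to be the bookkeeping of this shift and the harmless case of repeated generators ($s_k=s_{k+1}$, giving equalities in the within-position chain). It is precisely this rigidity that the argument exploits: once $n>(d-1)t+2$ the shifted coordinates $j_p$ may exceed $2$, and two generators whose $j$-sequences cross (such as $x_1x_5$ and $x_2x_4$ for $d=2,t=2,n=5$) need no longer be sorted, which is why the proposition is confined to $n=(d-1)t+2$.
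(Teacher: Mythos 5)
Your proof is correct and follows essentially the same route as the paper's: both arguments use the hypothesis $n=(d-1)t+2$ to show every generator is a ``threshold'' monomial $x_1\cdots x_{1+(s-1)t}x_{2+st}\cdots x_{2+(d-1)t}$, order these by the threshold, and then check sortedness. Your version merely makes explicit (via the shift $j_p=i_p-(p-1)t$ and the verification of the interleaving chain) the steps the paper compresses into ``one easily sees that $(u_i,u_j)$ is always sorted.''
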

\begin{proof}
It suffices to show that any pair $(u_i, u_j)$ with $u_i >_{lex} u_j$ is sorted. Let $u_i=x_{i_1}x_{i_2}\cdots x_{i_d}$ with $i_k-i_{k-1}\geq t$, for any $k\in \{2,\dots,d\}$ and $v_j=x_{j_1}x_{j_2}\cdots x_{j_d}$ with $j_k-j_{k-1}\geq t$, for any $k\in \{2, \ldots, d\}$. Since $n= (d-1)t+2$, the smallest monomial is $x_2x_{t+2}\cdots x_{(d-1)t+2}$ and the largest monomial is $x_1x_{t+1}\cdots x_{(d-1)t+1}$, with respect to lexicographic order. Then $1+jt\leq i_{j+1}\leq 2+jt$, for any $j\in \{0,1,\ldots, d-1\}$. Since $u_i>_{lex} u_j$, we have
\begin{center}
$u_i= x_1\dots x_{1+(k-1)t}x_{2+kt}\dots x_{2+(d-1)t}$ and
\end{center}
\begin{center}
$u_j= x_1\dots x_{1+(l-1)t}x_{2+lt}\dots x_{2+(d-1)t}$, for some $k>l.$
\end{center}
Then one easily sees that $(u_i, u_j)$ is always sorted.
\end{proof}

\begin{Corollary}\label{polyring}
Let $n\geq (d-1)t+2$. The $t$-spread Veronese algebra $K[I_{n,d,t}]$ is a polynomial ring if and only if $n=(d-1)t+2$. In particular, $K[I_{n,d,t}]$ is Gorestein if $n=(d-1)t+2$.
\end{Corollary}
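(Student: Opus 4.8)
The plan is to recast the polynomial-ring question in terms of the defining toric ideal and to bring in the sorting theory of Sturmfels \cite{sturmfels}. Introduce the polynomial ring $T = K[y_u : u \in \mathcal{M}_{n,d,t}]$, with one variable for each generator, and let $J$ be the kernel of the presentation $\varphi : T \to K[I_{n,d,t}]$, $y_u \mapsto u$. Grading $T$ and $K[I_{n,d,t}]$ so that each $u$ has degree $1$, the $u$'s are the minimal homogeneous generators of $K[I_{n,d,t}]$; hence $K[I_{n,d,t}]$ is a polynomial ring if and only if these generators are algebraically independent, that is, if and only if $J = 0$. Because $\mathcal{M}_{n,d,t}$ is a sortable set (\cite{ehq}), Sturmfels' theorem describes $J$ explicitly: it is generated by the sorting binomials $y_u y_v - y_{u'}y_{v'}$, one for each unsorted pair $(u,v)$ with $(u',v') = \sort(u,v)$, sortability guaranteeing $u',v' \in \mathcal{M}_{n,d,t}$ as well. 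Consequently $J = 0$ if and only if every pair of generators, written in decreasing lexicographic order, is already sorted.

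Granting this reduction, the case $n = (d-1)t+2$ is immediate from Proposition \ref{sorted}: for this value of $n$ every tuple of generators listed in decreasing lexicographic order is sorted, so in particular every pair is, each generator of $J$ collapses to $0$, and $J = 0$. Since a polynomial ring is Gorenstein (indeed regular), the final clause of the statement follows at once.

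For the converse I would produce, whenever $n \geq (d-1)t+3$, an explicit unsorted pair and hence a nonzero element of $J$. Take
$$u = \Bigl(\prod_{k=1}^{d-1} x_{(k-1)t+1}\Bigr)\, x_{(d-1)t+3}, \qquad v = \prod_{k=1}^{d} x_{(k-1)t+2}.$$
Both are $t$-spread monomials of degree $d$: in $v$ every consecutive gap equals $t$, while in $u$ the only non-standard gap, from $x_{(d-2)t+1}$ to $x_{(d-1)t+3}$, equals $t+2 \geq t$; and both lie in $\mathcal{M}_{n,d,t}$ since their largest index is at most $(d-1)t+3 \leq n$. Here $u >_{lex} v$, but the interleaving test for sortedness fails at the final step, where the last two exponents read $(d-1)t+3$ for $u$ against $(d-1)t+2$ for $v$. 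Sorting the pair gives
$$u' = \Bigl(\prod_{k=1}^{d-1} x_{(k-1)t+1}\Bigr)\, x_{(d-1)t+2}, \qquad v' = \Bigl(\prod_{k=1}^{d-1} x_{(k-1)t+2}\Bigr)\, x_{(d-1)t+3},$$
again two $t$-spread generators, with $u,v,u',v'$ pairwise distinct. Thus $y_u y_v - y_{u'}y_{v'}$ is a nonzero binomial in $J$, so $K[I_{n,d,t}]$ is not a polynomial ring when $n > (d-1)t+2$.

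The bulk of the work is the routine $t$-spread and sorting bookkeeping in the last paragraph; the one step deserving real care is the opening reduction, where I must pin down that the polynomial-ring property is equivalent to $J = 0$ and that $J$ is exactly the sorting ideal. Both hinge on the sortability of $\mathcal{M}_{n,d,t}$ from \cite{ehq} together with Sturmfels' structural result \cite{sturmfels}, and I would cite these precisely and check that the four monomials exhibited above are genuinely distinct so that the displayed binomial is nonzero.
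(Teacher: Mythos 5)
Your proof is correct and follows essentially the same route as the paper: the forward direction rests on Proposition~\ref{sorted} (the paper argues algebraic independence of the generators directly from sortedness of tuples, where you invoke Sturmfels' description of the toric ideal by sorting binomials --- an equivalent packaging given the sortability of $\mathcal{M}_{n,d,t}$), and the converse exhibits an explicit unsorted pair yielding a nonzero binomial in the defining ideal, exactly as the paper does with $u=x_1x_{t+1}\cdots x_{(d-2)t+1}x_n$ and $v=x_2x_{t+2}\cdots x_{(d-2)t+2}x_{n-1}$. Your pair with top indices $(d-1)t+3$ and $(d-1)t+2$ works just as well, and the distinctness and $t$-spread checks you flag do go through.
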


\begin{proof}
Let $u_1, \dots, u_q$ be the generators of $K[I_{n,d,t}]$. We want to show that these elements are algebraically independent over the field $K$. Let $f=\sum_{\alpha} a_{\alpha}y_1^{\alpha_1}y_2^{\alpha_2}\cdots y_q^{\alpha_q}$ be a polynomial such that $f(u_1, \dots, u_q)=0$. By Proposition \ref{sorted}, any $r$-tuple $(u_1,\dots, u_r)$ of generators with $u_1\geq_{lex}\dots \geq_{lex}u_r$ is sorted, which implies that the monomials $u_1^{\alpha_1}\dots u_q^{\alpha_q}$ are all pairwise distinct. Then the coefficients $a_{\alpha}$ are all zero, which implies that $u_1,\dots,u_q$ are algebraically independent over $K$.\\
For the converse part, assume that there exists $n\geq (d-1)t+3$ such that $K[I_{n,d,t}]$ is a polynomial ring. Then it is clear that if $u=x_1x_{t+1}\cdots x_{(d-2)t+1}x_n$ and $v=x_2x_{t+2}\cdots x_{(d-2)t+2}x_{n-1}$, then $(u,v)$ is unsorted and the pair $(u^{\prime}, v^{\prime})$, where $u^{\prime}=x_1x_{t+1}\cdots x_{(d-2)t+1}x_{n-1}$ and $v^{\prime}=x_2x_{t+2}\cdots x_{(d-2)t+2}x_n$ is the sorting pair of $(u,v)$ and the equality $uv-u^{\prime}v^{\prime}$ gives a non-zero polynomial in the defining ideal of $K[I_{n,d,t}]$, contradicting the fact that $K[I_{n,d,t}]$ is a polynomial ring.
\end{proof}

Moreover, we can make a stronger reduction. Let $n<dt$. Then the smallest $t$-spread monomial of degree $d$ is $x_{n-(d-1)t}x_{n-(d-2)t}\dots x_n$. As $n-(d-1)t<t$, the generators of $I_{n,d,t}$ can be viewed in a polynomial ring in the variables $\{x_1, \dots, x_n\}\setminus \cup_{l=1}^{d-1}\{x_{n-dt+lt+1}, \dots, x_{lt}\}$. Thus $K[I_{n,d,t}]\subset S^{\prime}$, where $$S^{\prime}=K[\{x_1, \dots, x_n\}\setminus \cup_{l=1}^{d-1}\{x_{n-dt+lt+1}, \dots, x_{lt}\}],$$ which is a polynomial ring in $n^{\prime}=n-(d-1)(dt-n)=d(n-(d-1)t)$ variables. Note that, in $S^{\prime}$, $I_{n,d,t}$ is a $t^{\prime}$-spread ideal, where $t^{\prime}=n-(d-1)t$. Thus, $n^{\prime}=dt^{\prime}$. This discussion shows that, in what follows, we may consider $n\geq dt$.

\begin{Theorem}\label{dimalg}
\begin{itemize}
\item [{\em (i)}] If $n\geq dt+1$, then $\dim K[I_{n,d,t}]= n$.
\item [{\em (ii)}] If $n=dt$, then $\dim K[I_{n,d,t}]=n-d+1$.
\end{itemize}
\end{Theorem}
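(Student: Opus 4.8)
The plan is to compute $\dim K[I_{n,d,t}]$ as the $\mathbb{Q}$-rank of the exponent vectors of its generators. Since $K[I_{n,d,t}]$ is the toric $K$-algebra generated by the squarefree monomials $x_T=\prod_{l\in T}x_l$ with $T$ ranging over the $t$-spread $d$-subsets of $\{1,\dots,n\}$, its Krull dimension equals the rank of the associated affine semigroup, i.e.\ the dimension of the $\mathbb{Q}$-span of the $0/1$ exponent vectors $v_T=\sum_{l\in T}e_l\in\mathbb{Q}^n$. Equivalently, writing $N=\{\phi\in\mathbb{Q}^n:\ \sum_{l\in T}\phi_l=0\text{ for every }t\text{-spread }d\text{-subset }T\}$ for the orthogonal complement of that span, one has $\dim K[I_{n,d,t}]=n-\dim_{\mathbb{Q}}N$. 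I would determine $N$ by a swapping principle: whenever two generators satisfy $T'=(T\setminus\{a\})\cup\{b\}$, the two relations $\langle\phi,v_T\rangle=\langle\phi,v_{T'}\rangle=0$ force $\phi_a=\phi_b$.

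For part \emph{(i)}, with $n\geq dt+1$, the goal is $N=0$. Fixing $i\in\{1,\dots,n-1\}$, I would construct a $t$-spread $d$-set $T$ containing $i$ but not $i+1$ such that $T'=(T\setminus\{i\})\cup\{i+1\}$ is again $t$-spread, yielding $\phi_i=\phi_{i+1}$. The construction places $b$ indices at or below $i$ with common gap $t$, namely $i,i-t,\dots,i-(b-1)t$, and the remaining $d-b$ indices above $i$ starting one step wider, namely $i+t+1,i+2t+1,\dots,i+(d-b)t+1$; the gap just after $i$ is then $t+1$, so both $T$ and $T'$ are $t$-spread. The admissibility conditions $i-(b-1)t\geq 1$ and $i+(d-b)t+1\leq n$ read $d-\lfloor (n-i-1)/t\rfloor\leq b\leq 1+\lfloor (i-1)/t\rfloor$, and I would verify this interval contains an integer $b\in\{1,\dots,d\}$ precisely when $n\geq dt+1$. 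At the sharp boundary $n=dt+1$ the identity $\lceil i/t\rceil=1+\lfloor (i-1)/t\rfloor$ collapses the two bounds to the single value $b=1+\lfloor (i-1)/t\rfloor\in\{1,\dots,d\}$. Letting $i$ vary gives $\phi_1=\cdots=\phi_n$, whence $\sum_{l\in T}\phi_l=d\phi_1=0$ forces $\phi=0$, so $\dim K[I_{n,d,t}]=n$.

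For part \emph{(ii)}, with $n=dt$, I would first establish the structural fact that every $t$-spread $d$-subset $T=\{i_1<\cdots<i_d\}$ meets each block $B_j=\{(j-1)t+1,\dots,jt\}$ in exactly one index: from $i_j\geq i_1+(j-1)t\geq 1+(j-1)t$ and $i_j\leq i_d-(d-j)t\leq dt-(d-j)t=jt$ one gets $i_j\in B_j$. A within-block swap is then always available: fixing $i_k=(k-1)t+1$ for $k<j$ and $i_k=kt$ for $k>j$, any choice $i_j\in B_j$ produces a valid $t$-spread $d$-set, so every transposition of two indices inside a single block $B_j$ is realized, forcing $\phi$ to be constant on each block, say $\phi_l=c_j$ for $l\in B_j$. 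Since each generator carries exactly one index per block, the relation $\sum_{l\in T}\phi_l=0$ becomes $\sum_{j=1}^d c_j=0$, and conversely every block-constant $\phi$ with $\sum_j c_j=0$ lies in $N$. Hence $N$ is exactly this space, of dimension $d-1$, and $\dim K[I_{n,d,t}]=n-(d-1)=n-d+1$.

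The main obstacle I anticipate is the combinatorial construction in \emph{(i)}: producing, uniformly for every $i\in\{1,\dots,n-1\}$ including the extreme positions, a single admissible pair $T,T'$ realizing the transposition $i\leftrightarrow i+1$, and confirming feasibility all the way down to the tight threshold $n=dt+1$, where the available room is minimal and the floor estimate must be carried out exactly. In \emph{(ii)} the only delicate point is the one-index-per-block lemma; once it is in place, the within-block swaps pin down $N$ immediately.
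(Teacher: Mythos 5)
Your proposal is correct, but it takes a genuinely different route from the paper. The paper proves \emph{(i)} by working in the quotient field $Q(A)$: it exhibits, through an inductive chain of explicit monomial quotients, that $x_i^d\in Q(A)$ for every $i$, whence $\trdeg Q(A)\geq n$, and combines this with the trivial upper bound $\dim A\leq n$; for \emph{(ii)} the paper simply cites \cite[Corollary 3.2]{bahareh} and gives no argument. You instead compute the rank of the affine semigroup directly by identifying the orthogonal complement $N$ of the $\mathbb{Q}$-span of the exponent vectors, using the swap principle $T'=(T\setminus\{a\})\cup\{b\}\Rightarrow\phi_a=\phi_b$. The two approaches are dual in spirit for \emph{(i)} --- the paper shows the span contains all $d e_i$, you show its annihilator is zero --- and your feasibility analysis of the pair $(T,T')$ checks out, including the tight case $n=dt+1$ where $\lceil i/t\rceil=1+\lfloor(i-1)/t\rfloor$ pins down $b$ uniquely. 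What your route buys is a uniform, self-contained treatment of both cases: your one-index-per-block lemma for $n=dt$ (each $t$-spread $d$-set meets each $B_j=\{(j-1)t+1,\dots,jt\}$ exactly once, so $N$ is the $(d-1)$-dimensional space of block-constant vectors summing to zero) replaces an external citation with a transparent two-line computation, and it also explains structurally \emph{why} the dimension drops by exactly $d-1$ at $n=dt$. The paper's fraction-field computation, by contrast, is more hands-on and stays entirely inside the algebra $A$, at the cost of a longer induction and of leaving \emph{(ii)} unproved in the text.
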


\begin{proof}
(i). We denote by $y_i$ the $d$-th power of the variable $x_i$, for any $1\leq i\leq n$. Let $A=K[I_{n,d,t}]$. We prove that $y_1, y_2, \dots, y_n$ belong to the quotient field of $A$, denoted by $Q(A)$. We first show by induction on $0\leq k\leq d-1$ that $y_{kt+j}\in Q(A)$, for any $1\leq j\leq t$.\\
\indent We check for $k=0$: it is clear that
\begin{center}
$y_1=x_1^d=\frac{\prod_{j=1}^{d}x_{1}x_{t+1}\dots \widehat{x_{jt+1}}\dots x_{td+1}}{(x_{t+1}\dots x_{dt+1})^{d-1}}\in Q(A).$
\end{center}
\indent Here, by $\widehat{x_{jt+1}}$, we mean that the variable $x_{jt+1}$ is missing.\\
 Since $y_1y_{t+j}\dots y_{(d-1)t+j}\in Q(A)$, for $1\leq j\leq t$, we get $y_{t+j}\dots y_{(d-1)t+j}\in Q(A)$, for $1\leq j\leq t$. But also $y_jy_{t+j}\dots y_{(d-1)t+j}\in Q(A)$, so we obtain $y_j\in Q(A)$, for any $1\leq j\leq t$. Therefore, it follows $y_1, \dots, y_t$ belong to $Q(A)$.\\
\indent Assume that $y_1, y_2, \dots, y_t, \dots, y_{kt+1}, \dots, y_{(k+1)t}\in Q(A)$. We want to prove that $y_{(k+1)t+1}, \dots, y_{(k+2)t}$ are also in $Q(A)$. Firstly, let us check if $y_{(k+1)t+1}$ belongs to $Q(A)$. Notice that, since $y_{t+1}y_{2t+1}\dots y_{kt+1}y_{(k+1)t+1}\dots y_{dt+1}$ and $y_{t+1}, y_{2t+1}, \dots, y_{kt+1}\in Q(A)$ by our assumption, it follows that $y_{(k+1)t+1}\dots y_{dt+1}\in Q(A)$. \\
Also, since $y_1y_{2t+1}\dots y_{kt+1}y_{(k+2)t+1}\dots y_{dt+1}\in Q(A)$, using our assumption, we get $y_{(k+2)t+1}\dots y_{dt+1}\in Q(A)$. But since $y_{(k+1)t+1}\dots y_{dt+1}$ is in $Q(A)$, it follows that $y_{(k+1)t+1}\in Q(A)$.\\
 \indent Now we check that $y_{(k+1)t+s}\in Q(A)$, for any $2\leq s\leq t$. Using the monomials $y_s\dots y_{kt+s}y_{(k+1)t+s}\dots y_{(d-1)t+s}\in Q(A)$ and $y_s, \dots, y_{kt+s}\in Q(A)$ by our assumption, we get $$y_{(k+1)t+s}\dots y_{(d-1)t+s}\in Q(A).$$
 Moreover, $y_1\dots y_{kt+1}y_{(k+1)t+1}y_{(k+2)t+s}\dots y_{(d-1)t+s}$ is in $Q(A)$, so by our assumption and by the fact that $y_{(k+1)t+1}\in Q(A)$, we obtain $$y_{(k+2)t+s}y_{(k+3)t+s}\dots y_{(d-1)t+s}\in Q(A).$$
 Therefore, using $y_{(k+1)t+s}\dots y_{(d-1)t+s}$ and $y_{(k+2)t+s}\dots y_{(d-1)t+s}$ in $Q(A)$, we get $$y_{(k+1)t+s}\in Q(A),$$ for any $2\leq s\leq t$. So far, we have seen that $y_{kt+j}\in Q(A)$, also for any $0\leq k\leq d-1$ and $1\leq j\leq t$. Let now $dt+1\leq m\leq n$. Then $y_1y_{t+1}\dots y_{(d-1)t+1}y_m\in Q(A)$. Since $y_1, y_{t+1}, \dots, y_{(d-1)t+1}\in Q(A)$, it follows that $y_m\in Q(A)$ as well. Therefore, $Q(A)\supset \{x_1^d, \dots, x_n^d\}$. It follows that $\dim A= \trdeg Q(A)\geq n$, since $x_1^d, \dots, x_n^d$ are obviously algebraic independent over $K$. But since $A$ is a subalgebra of $K[x_1, \dots, x_n]$, by \cite[Proposition 3.1]{binomialideals}, $\dim A \leq n$. Therefore, $\dim A=n$.\\
(ii). It follows from \cite[Corollary 3.2]{bahareh}.\\
\end{proof}
\begin{Remark}
The result from Theorem \ref{dimalg} (i) follows from \cite[Corollary 3.2]{bahareh}, but we prefered to give a completely different proof here.
\end{Remark}

\indent Let $\mathcal{P}\subset \mathbb{R}^n$ denote the rational convex polytope
\begin{center}
$\mathcal{P}=\{(a_1,\dots, a_n)\in \mathbb{R}^n: \sum_{i=1}^{n}a_i=d, a_i\geq 0,$ for $1\leq i\leq n,$ and $a_{i}+\dots + a_{i+t-1}\leq 1$, for $1\leq i\leq n-t+1\}.$
\end{center}

Clearly, $K[I_{n,d,t}]=K[\mathcal{P}]$, since $K[I_{n,d,t}]$ is generated by the monomials of $G(I_{n,d,t})$, that is, by monomials $x_1^{a_1}\dots x_n^{a_n}$ with $\sum_{i=1}^{n}a_i=d$,$a_i\geq 0$, for $1\leq i\leq n$ and $a_i+\dots +a_{i+t-1}\leq 1$, for $1\leq i\leq n-t+1$.\\
\indent Since $K[I_{n,d,t}]$ is a normal ring, by \cite[Lemma 4.22]{binomialideals}, we get the following
\begin{Theorem}
The $t$-spread Veronese algebra $K[I_{n,d,t}]$ is the Ehrhart ring $\mathcal{A}(\mathcal{P})$.
\end{Theorem}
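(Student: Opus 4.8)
The plan is to realize the common toric ring $K[\mathcal{P}]$ as the bridge between $K[I_{n,d,t}]$ and the Ehrhart ring $\mathcal{A}(\mathcal{P})$, and then to invoke the Stanley--Danilov description recorded in Theorem \ref{standan}. The equality $K[I_{n,d,t}]=K[\mathcal{P}]$ has already been observed just before the statement, so everything reduces to the identity $K[\mathcal{P}]=\mathcal{A}(\mathcal{P})$.

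First I would make the lattice-point dictionary precise. Since $t\ge 2$, every $t$-spread monomial of degree $d$ is squarefree, so its exponent vector $(a_1,\dots,a_n)$ lies in $\{0,1\}^n$, satisfies $\sum_{i=1}^n a_i=d$, and encodes the spread condition $i_j-i_{j-1}\ge t$ exactly as the window inequalities $a_i+\cdots+a_{i+t-1}\le 1$ for $1\le i\le n-t+1$. Conversely, any $\alpha\in\mathcal{P}\cap\mathbb{Z}^n$ has nonnegative integer entries summing to $d$, and each coordinate $\alpha_i$ is bounded by a window sum of length $t$ (the window $[i,i+t-1]$ if $i\le n-t+1$, otherwise the window $[n-t+1,n]$, which is available since $n>t$); hence $\alpha_i\le 1$ and $\alpha$ is the exponent vector of a $t$-spread monomial of degree $d$. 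This shows that $\mathcal{P}\cap\mathbb{Z}^n$ is exactly the set of exponent vectors of $G(I_{n,d,t})$, which both re-confirms $K[I_{n,d,t}]=K[\mathcal{P}]$ and identifies the degree-$q$ piece of $K[\mathcal{P}]$ with the span of $\{x^\alpha s^q:\alpha\in\mathcal{P}\cap\mathbb{Z}^n\}$.

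Next I would verify the two hypotheses needed to run Theorem \ref{standan}: that $\mathcal{P}$ is integral and that $K[\mathcal{P}]$ is normal. Normality of $K[I_{n,d,t}]$ is already in hand (the algebra is a Cohen--Macaulay normal domain by \cite[Corollary 3.4]{ehq}), and the sortability of $G(I_{n,d,t})$ forces the integer decomposition property for $\mathcal{P}$, so that for every $q\ge 1$ each point of $q\mathcal{P}\cap\mathbb{Z}^n$ is a sum of $q$ lattice points of $\mathcal{P}$; in particular $\mathcal{P}$ is an integral polytope. With these two facts, Theorem \ref{standan} applies verbatim and gives $K[\mathcal{P}]=\mathcal{A}(\mathcal{P})$, since its degree-$q$ component $\{x^\alpha s^q:\alpha\in q\mathcal{P}\cap\mathbb{Z}^n\}$ is precisely $\mathcal{A}(\mathcal{P})_q$. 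Chaining the two identifications yields $K[I_{n,d,t}]=K[\mathcal{P}]=\mathcal{A}(\mathcal{P})$, as desired; alternatively one may quote \cite[Lemma 4.22]{binomialideals}, which packages ``normal toric ring $=$ Ehrhart ring'' directly from normality.

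I expect the only real obstacle to be the middle step, namely confirming that $\mathcal{P}$ is genuinely integral (so that Theorem \ref{standan} is applicable) and that no lattice points are gained or lost when passing between the grading of $K[I_{n,d,t}]$ by monomial degree and the grading of $\mathcal{A}(\mathcal{P})$ by dilation factor. Both reduce to the integer decomposition property of $\mathcal{P}$, which in turn follows from the sortability of the generators established in \cite[Proposition 3.1]{ehq}; the remaining verifications are routine.
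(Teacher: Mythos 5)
Your proof is correct and takes essentially the same route as the paper: the paper likewise identifies $K[I_{n,d,t}]$ with the toric ring $K[\mathcal{P}]$ via the lattice-point dictionary and then deduces $K[\mathcal{P}]=\mathcal{A}(\mathcal{P})$ from normality by citing \cite[Lemma 4.22]{binomialideals}. Your additional verifications (that every $\alpha\in\mathcal{P}\cap\mathbb{Z}^n$ has $0/1$ coordinates, and that sortability gives the integer decomposition property and hence integrality of $\mathcal{P}$) simply make explicit what the paper leaves implicit.
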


\section{Gorenstein t-spread Veronese algebras}
\indent In this section we classify the Gorenstein $t$-spread Veronese algebras. We split the classification in several theorems.
\begin{Theorem}\label{n1}
If $n=dt+k$, $2\leq k\leq d-1$, then in $(t+d)P$ there exist $d$ interior lattice points. Therefore, $K[I_{n,d,t}]$ is not Gorenstein.
\end{Theorem}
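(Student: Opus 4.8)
The plan is to realize $K[I_{n,d,t}]=K[\mathcal{P}]$ through its canonical module (Theorem~\ref{standan}) and then to show that this module is not principal, which by Remark~\ref{unique} is equivalent to failure of the Gorenstein property. The first step is a purely combinatorial reformulation: a lattice point $a=(a_1,\dots,a_n)$ lies in $\relint(q\mathcal{P})$ precisely when $a_i\ge 1$ for every $i$, $\sum_{i=1}^n a_i=qd$, and $a_i+\dots+a_{i+t-1}\le q-1$ for $1\le i\le n-t+1$. Substituting $c_i=a_i-1$ turns this into the equivalent system $c_i\ge 0$, $\sum_i c_i=d(q-t)-k$, and $c_i+\dots+c_{i+t-1}\le q-1-t$. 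I would work entirely with these ``surplus'' vectors $c$, exploiting that every window of length $t$ meets the arithmetic progression $1,t+1,2t+1,\dots$ in exactly one point.

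To establish the stated count I specialise to $q=t+d$, where the system reads $c_i\ge 0$, $\sum_i c_i=d^2-k$, and every window $\le d-1$. Since $n\ge dt+2$, the progression above has $\lceil n/t\rceil\ge d+1$ positions, so I may place the whole surplus on it, all other coordinates being $0$; then each window sees a single nonzero coordinate, automatically bounded by $d-1$, so every such vector is genuinely interior. Concretely, when $\lceil n/t\rceil=d+1$ I load all progression positions to $d-1$ and lower one chosen position by $k-1$ (this stays between $0$ and $d-1$ because $2\le k\le d-1$), which realises the required sum $d^2-k$; letting the lowered position range over the progression produces $d+1$ pairwise distinct interior vectors, and an analogous redistribution handles $\lceil n/t\rceil>d+1$. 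This yields the asserted interior lattice points of $(t+d)\mathcal{P}$.

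It remains to deduce non-Gorensteinness. Let $\delta$ be the least dilation factor with $\relint(\delta\mathcal{P})\cap\mathbb{Z}^n\neq\emptyset$, computed from the window-covering bound $\sum_i c_i\le\lceil n/t\rceil(q-1-t)$. If $\Omega$ were principal, its degree-$q$ piece would be the translate of one generator by $(q-\delta)\mathcal{P}\cap\mathbb{Z}^n$, so the number of interior lattice points of $q\mathcal{P}$ would equal $\#\big((q-\delta)\mathcal{P}\cap\mathbb{Z}^n\big)$ for all $q$; in particular $\relint(\delta\mathcal{P})$ would contain a single point, which is exactly Remark~\ref{unique}. When the surplus construction can already be run at level $q=\delta$ it produces two distinct interior points there and we finish at once. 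The genuine obstacle is the complementary case, in which $\delta\mathcal{P}$ carries a \emph{unique} interior vector $\omega$ yet the ring is still not Gorenstein: there I must show that one of the interior points of $(t+d)\mathcal{P}$ is a new minimal generator of $\Omega$, i.e.\ that it is not of the form $\omega'+v$ with $\omega'$ interior in $(t+d-1)\mathcal{P}$ and $v\in\mathcal{P}\cap\mathbb{Z}^n$ (equivalently, via Theorem~\ref{stdtype}, that the dual of $\mathcal{Q}=\delta\mathcal{P}-\omega$ has a non-integral vertex). This indecomposability check --- tracking which degree-one generators can be subtracted from a spread vector without forcing some window sum up to $q$ --- is where I expect the real work to lie.
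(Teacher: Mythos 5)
Your construction of the interior lattice points of $(t+d)\mathcal{P}$ is sound, and arguably cleaner than the paper's (which perturbs a single tail coordinate rather than redistributing the surplus along the progression $1,t+1,2t+1,\dots$), but the proof of the theorem is not complete: exhibiting several interior lattice points at \emph{some} dilation level never contradicts the Gorenstein property by itself. Remark \ref{unique} only forbids multiple interior points at the \emph{minimal} level $\delta$; for $q>\delta$ the set $q(\mathcal{P}-\partial\mathcal{P})\cap\mathbb{Z}^n$ is large even for Gorenstein rings (for instance, when $n=dt+d$ one has $\delta=t+1$ with unique interior point $(1,\dots,1)$ and the ring \emph{is} Gorenstein, yet every higher dilation carries many interior lattice points). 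The paper accordingly spends most of its proof showing that $(t+d-1)(\mathcal{P}-\partial\mathcal{P})$ contains no lattice point, so that $\delta=t+d$ and Remark \ref{unique} applies to the $d$ points it constructs; you explicitly defer exactly this step (``where I expect the real work to lie''), so the non-Gorenstein conclusion is not established.

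Moreover, the tool you propose for locating $\delta$ cannot deliver what you need. The covering bound $\sum_i c_i\le\lceil n/t\rceil(q-1-t)$ with $\lceil n/t\rceil=d+1$ (the case $k\le t$) reads $d(q-t)-k\le(d+1)(q-t-1)$, i.e.\ $q\ge t+d+1-k$, which for $k\ge 2$ is strictly weaker than $q\ge t+d$. Worse, your own surplus construction already runs at level $q=t+d-1$: there $\sum_i c_i=d^2-d-k$ with window sums $\le d-2$, and since $(d+1)(d-2)=d^2-d-2\ge d^2-d-k$ for $k\ge 2$, the surplus again fits on the progression --- e.g.\ for $n=8$, $d=3$, $t=2$ the point $(2,1,2,1,2,1,2,1)$ satisfies every defining inequality of $4\mathcal{P}$ strictly. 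So the minimality of $t+d$ is genuinely in doubt and cannot be assumed: you must either determine the true minimal level $\delta$ and exhibit two distinct interior lattice points there, or carry out the indecomposability/duality analysis via Theorem \ref{stdtype} that you only sketch. As written, the proposal proves the first assertion of the statement but not the second.
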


\begin{proof} By Theorem \ref{dimalg}, $\dim K[I_{n,d,t}]=n$, thus $\dim (\mathcal{P})=n-1$.
 Let $H$ be the hyperplane in $\mathbb{R}^n$ defined by the equation $a_1+\dots+a_n=d$ and let $\phi : \mathbb{R}^{n-1}\rightarrow H$ denote the affine map defined by $$
 \phi (a_1, \dots, a_{n-1})= (a_1, \dots, a_{n-1}, d-(a_1+\dots+ a_{n-1})),
 $$
 for $(a_1, \dots, a_{n-1})\in \mathbb{R}^{n-1}$. Then $\phi$ is an affine isomorphism and $\phi (\mathbb{Z}^{n-1})=H\cap \mathbb{Z}^{n}$. Therefore, $\phi ^{-1}(\mathcal{P})$ is an integral convex polytope in $\mathbb{R}^{n-1}$ of $\dim \phi^{-1}(\mathcal{P})=\dim \mathcal{P}=n-1$. The Ehrhart ring $\mathcal{A}(\phi^{-1}(\mathcal{P}))$ is isomorphic with $\mathcal{A}(\mathcal{P})$ as graded algebras over $K$. Thus, we want to see if $\mathcal{A}(\phi^{-1}(\mathcal{P}))$ is Gorenstein, and, by abuse of notation, we write $\mathcal{P}$ instead of $\phi^{-1}(\mathcal{P})$. Thus,
 \begin{center}
 $\mathcal{P}= \{(a_1, \dots, a_{n-1})\in \mathbb{R}^{n-1}: a_i\geq 0$,for $1\leq i\leq n-1,$ and $a_i+a_{i+1}+\dots + a_{i+t-1}\leq 1$, for $1\leq i\leq n-t, a_1+a_2+\dots+ a_{n-t}\geq d-1\}$.
 \end{center}

 In our hypothesis on $n$, we show that there are no interior lattice points at lower levels than $t+d$. It is enough to see that there are no interior lattice points at level $t+d-1$. Let $(x_1,\dots,x_{n-1})\in (t+d-1)(\mathcal{P}-\partial \mathcal{P})$. We have
 \begin{center}
 $(t+d-1)(\mathcal{P}-\partial \mathcal{P})= \{(a_1, \dots, a_{n-1})\in \mathbb{R}^{n-1}: a_i> 0, 1\leq i\leq n-1, a_i+a_{i+1}+\dots + a_{i+t-1}< (t+d-1), 1\leq i\leq n-t, a_1+a_2+\dots+ a_{n-t}> (d-1)(t+d-1)\}$.
 \end{center}
  Since $x_1+\dots+x_{(d-1)t+k}\geq (d-1)(t+d-1)$, we have $x_1+\dots+ x_{(d-1)t+k-1}\geq (d-1)(t+d-1)-x_{(d-1)t+k}$, thus
$$ (d-1)(t+d-2)+\sum_{i=1}^{k-1}x_{(d-1)t+i}\geq \sum_{i=1}^{(d-1)t+k-1}x_i\geq (d-1)(t+d-1)-x_{(d-1)t+k}. $$ It follows that $\sum_{i=1}^{k-1}x_{(d-1)t+i}\geq d-1-x_{(d-1)t+k}$ and, since $x_{(d-1)t+k}<1$, we obtain
$$ k-1>\sum_{i=1}^{k-1}x_{(d-1)t+i}\geq d-1,$$ which implies that $k\geq d+1$, a contradiction.
 Thus, there are no interior lattice points in the dilated polytope at lower levels than $t+d$.\\
 \indent Let $\delta \geq 1$ be the smallest integer such that $\delta(\mathcal{P}-\partial \mathcal{P})\neq \emptyset$. We show that $\delta=t+d$.
Indeed, in the dilated polytope $(t+d)(\mathcal{P}-\partial \mathcal{P})$ there are $d$ interior lattice points of the form $\alpha_{r}=(x_1^{(r)}, \dots, x_{n-1}^{(r)})$, $1\leq r\leq d$, where

$x_{j}^{(r)} = \begin{cases} d,  \mbox{if } j=it+1, \\ 1,  \mbox{if } j=it+l\mbox{ with } 0\leq i\leq d-1, 1<l\leq t \\\mbox{ or } j=dt+l \mbox{ with } 1\leq l\leq dt+k-2,\\ r, \mbox{if} j=dt+k-1.\end{cases}$

 It is clear that, for any $1\leq j\leq n-1$, $x_j^{(r)}>0.$ For any $1\leq i\leq (d-1)t+1$, $x_i^{(r)}+x_{i+1}^{(r)}+\dots + x_{i+t-1}^{(r)}=d+t-1<(t+d)(d-1)$. If $k<t$, then $x_{(d-1)t+t-k}^{(r)}+\dots+ x_{dt+k-1}^{(r)}=(t-k)+(k-1)+r=t-1+r<t+d$ and, if $k\geq t$, then $x_{dt+k-t}^{(r)}+\dots+ x_{dt+k-1}^{(r)}=t-1+r<t+d$. Also, $x_1^{(r)}+\dots+ x_{n-t}^{r}=(d+t-1)(d-1)+d+k-1= (d+t)(d-1)+k>(d-1)(t+d)$, since $k\geq 2$.
  Therefore, these are interior lattice points in $(t+d)\mathcal{P}$. Thus, in this case, the $t$-spread Veronese algebra $K[I_{n,d,t}]$ is not Gorenstein, by Remark \ref{unique}.
\end{proof}

\begin{Example}
Let $n=8$, $d=3$ and $t=2$. The smallest level where there are interior lattice points in the dilated polytope is $\delta=5$. In $5(\mathcal{P}-\partial \mathcal{P})$ there are $3$ interior lattice points: $(3,1,3,1,3,1,1), (3,1,3,1,3,1,2)$ and $(3,1,3,1,3,1,3)$.\\
Thus, the $2$-spread Veronese algebra $K[I_{8,3,2}]$ is not Gorenstein.
\end{Example}

\begin{Theorem}\label{n2}
If $n\geq (t+1)d+1$, then $K[I_{n,d,t}]$ is not Gorenstein.
\end{Theorem}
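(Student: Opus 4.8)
The plan is to realize $K[I_{n,d,t}]$ again as the Ehrhart ring of the integral polytope $\mathcal{P}\subset\mathbb{R}^{n-1}$ produced by the affine reduction used in Theorem \ref{n1}, but this time I must keep the inequality $a_1+\dots+a_{n-1}\leq d$ coming from $a_n\geq 0$; it is inactive in the range of Theorem \ref{n1} but is decisive here. So $\mathcal{P}$ is cut out by $a_i\geq 0$, the window inequalities $a_i+\dots+a_{i+t-1}\leq 1$ for $1\leq i\leq n-t$, the inequality $a_1+\dots+a_{n-t}\geq d-1$, and $a_1+\dots+a_{n-1}\leq d$; by Theorem \ref{dimalg}(i) it is full-dimensional. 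In contrast to Theorem \ref{n1}, a multiplicity of interior lattice points at the minimal level is not available in general (for $d\mid n$ the minimal dilation carries only the single point $(1,\dots,1)$), so I would instead deduce non-Gorensteinness from Theorem \ref{stdtype}, by producing a single facet of $\delta\mathcal{P}$ whose associated vertex of $\mathcal{Q}^{*}$ is not integral.

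First I would determine $\delta$, the least dilation with an interior lattice point, and claim $\delta=\lceil n/d\rceil$. The lower bound is immediate: an interior lattice point $x$ has $x_i\geq 1$ for all $i$, so $\sum_{i=1}^{n-1}x_i\geq n-1$, whereas interiority gives $\sum_{i=1}^{n-1}x_i<d\delta$; hence $\delta>(n-1)/d$, i.e. $\delta\geq\lceil n/d\rceil$. For achievability I would set $s=d\delta-n\in\{0,\dots,d-1\}$ and verify that $(1,\dots,1)$ with $s$ extra units placed as $t$-spread height-two entries among the first $n-t$ coordinates meets all the strict inequalities; here one uses $\delta\geq t+2$, which holds precisely because $n\geq (t+1)d+1$, to keep every window $\leq\delta-1$, and $n>dt$ to obtain $\sum_{i=1}^{n-t}x_i>(d-1)\delta$.

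The decisive point is then a pigeonhole argument. Any interior lattice point $\alpha$ at level $\delta$ has total excess $\sum_i(\alpha_i-1)\leq d\delta-n\leq d-1$, so at most $d-1$ of its coordinates exceed $1$; since each coordinate lies in at most $t$ of the $n-t$ windows and $n-t>(d-1)t$ (again because $n>dt$), some window $[i_0,i_0+t-1]$ consists entirely of $1$'s and hence sums to $t\leq\delta-2$. Fixing such an $\alpha$ and putting $\mathcal{Q}=\delta\mathcal{P}-\alpha$, the window facet $\sum_{j=i_0}^{i_0+t-1}x_j\leq\delta$ of $\delta\mathcal{P}$ becomes $\sum_{j=i_0}^{i_0+t-1}y_j\leq\delta-t$ on $\mathcal{Q}$, so the corresponding vertex of $\mathcal{Q}^{*}$ is $(e_{i_0}+\dots+e_{i_0+t-1})/(\delta-t)$, which is not a lattice point because $\delta-t\geq 2$. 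By Theorem \ref{stdtype}, $\mathcal{Q}^{*}$ is not integral and $K[I_{n,d,t}]$ is not Gorenstein. The threshold is sharp: at $n=(t+1)d$ one has $\delta-t=1$ and these vertices are integral, consistent with $n=(t+1)d$ being Gorenstein.

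The step I expect to be the main obstacle is showing that the chosen all-ones window $a_{i_0}+\dots+a_{i_0+t-1}\leq 1$ is a genuine facet of $\mathcal{P}$ rather than a redundant inequality, since a redundant inequality contributes no vertex of $\mathcal{Q}^{*}$. I would resolve this by exhibiting $n-1$ affinely independent lattice points of $\mathcal{P}$ on the hyperplane $a_{i_0}+\dots+a_{i_0+t-1}=1$, namely generators of $I_{n,d,t}$ using exactly one variable from $\{x_{i_0},\dots,x_{i_0+t-1}\}$; the room needed to do this, and to choose $i_0$ away from the two ends, is guaranteed by $n\geq (t+1)d+1$. Pinning down $\delta=\lceil n/d\rceil$ and checking the interior-point construction are routine but must be handled carefully for the smallest admissible values of $n$.
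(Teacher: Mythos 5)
Your route is genuinely different from the paper's, and your opening observation is the reason why. The paper describes the projected polytope without the inequality $a_1+\dots+a_{n-1}\le d$ coming from $a_n\ge 0$, finds $\delta=t+1$ with unique interior lattice point $(1,\dots,1)$, and extracts non-integrality of $\mathcal{Q}^*$ from the hyperplane $\sum_{i=1}^{n-t}y_i=(d-1)(t+1)-(n-t)$. Once that inequality is reinstated, as you do, the all-ones vector has coordinate sum $n-1\ge (t+1)d$ and so is \emph{not} an interior point of $(t+1)\mathcal{P}$ in the range $n\ge (t+1)d+1$; your value $\delta=\lceil n/d\rceil\ge t+2$ is what the correct polytope actually gives, so your divergence from the printed argument is forced, not optional. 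Your lower bound for $\delta$, the explicit interior point realizing it, and the pigeonhole producing an all-ones window (at most $s=d\delta-n\le d-1$ coordinates exceed $1$, each meeting at most $t$ of the $n-t>(d-1)t$ windows) all check out.

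The one genuine gap is the step you yourself flag: the conclusion needs the window inequality $\sum_{j=i_0}^{i_0+t-1}a_j\le 1$ to define a \emph{facet} of $\mathcal{P}$, since otherwise $(e_{i_0}+\dots+e_{i_0+t-1})/(\delta-t)$ is not a vertex of $\mathcal{Q}^*$ and Theorem \ref{stdtype} yields nothing. You sketch the right strategy ($n-1$ affinely independent generators meeting the window in exactly one variable) but do not carry it out; moreover the pigeonhole only guarantees an all-ones window \emph{somewhere}, so as written you would need facetness for whatever position $i_0$ arises, not only for $i_0$ ``away from the ends.'' I would restructure to minimize what must be proved: when $d\nmid n$ your own construction already produces at least two distinct interior lattice points at level $\delta$ (two different $t$-spread placements of the $s\ge 1$ extra units among the first $n-t$ coordinates), so $K[I_{n,d,t}]$ is not Gorenstein by Remark \ref{unique} with no facet analysis at all. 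That isolates the case $n=d\delta$ with $\delta\ge t+2$, where the unique interior point is $(1,\dots,1)$, every window is all-ones, and you may choose the single most convenient $i_0$ and verify facetness just there. Until that verification is written down, the proof is incomplete.
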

\begin{proof} Let $n=kd+q$ with $k\geq t+1$ and $q\geq 1$.
By Theorem \ref{dimalg}, $\dim K[I_{n,d,t}]=n$, thus $\dim (\mathcal{P})=n-1$. Using similar arguments as in Theorem \ref{n1},
\begin{center}
 $\mathcal{P}= \{(a_1, \dots, a_{n-1})\in \mathbb{R}^{n-1}: a_i\geq 0, 1\leq i\leq n-1, a_i+a_{i+1}+\dots + a_{i+t-1}\leq 1, 1\leq i\leq n-t, a_1+a_2+\dots+ a_{n-t}\geq d-1\}$.
 \end{center}
\indent We show that the smallest integer $\delta\geq 1$ such that $\delta(\mathcal{P}-\partial \mathcal{P})$ contains lattice points is $t+1$. Assume that there are interior lattice points at lower levels than $t+1$. It is enough to show that there are no interior lattice points at level $t$. In this case, for each lattice point $(a_1, \dots, a_{n-1})\in t(\mathcal{P}- \partial \mathcal{P})$, we have $a_{i}+a_{i+1}+\dots+a_{i+t-1}<t$, for any $1\leq i\leq n-t$. Since each $a_i \geq 1$, for any $1\leq i \leq n-1$, we have
\begin{center}
$a_i+a_{i+1}+\dots+a_{i+t-1}\geq t,$
\end{center}
 which is a contradiction. \\
 \indent We show that $(t+1)(\mathcal{P}-\partial \mathcal{P})$ contains only one lattice point which has all the coordinates equal to $1$. The interior of the $(t+1)$-dilated polytope is
 \begin{center}
 $(t+1)(\mathcal{P}-\partial \mathcal{P})= \{(a_1, \dots, a_{n-1})\in \mathbb{R}^{n-1}: a_i>0, 1\leq i\leq n-1, a_i+a_{i+1}+\dots + a_{i+t-1}<t+1, 1\leq 1\leq n-t, a_1+a_2+\dots+ a_{n-t}> (d-1)(t+1)\}$.
 \end{center}
 We know that, for each lattice point $(a_1, \dots, a_{n-1})\in (t+1)(\mathcal{P}-\partial \mathcal{P})$, we have $a_i+\dots +a_{i+t-1}\leq t$, for any $1\leq i\leq n-t$ and, since $a_j\geq 1$, for any $i\leq j\leq i+t-1$, we obtain $a_i+\dots+a_{i+t-1}\geq t$, thus we have equality which implies that $a_j=1$, for any $1\leq j\leq n-1$. Hence, $(1,1,\dots, 1)\in \mathbb{Z}^{n-1}$ is the unique interior lattice point in the dilated polytope $(t+1)\mathcal{P}$. Let us consider $\mathcal{Q}=(t+1)\mathcal{P}-(1,1,\dots, 1)$. We will show that $K[\mathcal{P}]$ is not Gorenstein by using Theorem \ref{stdtype}. In fact, we show that the dual $\mathcal{Q}^{*}$ of
 \begin{center}
$\mathcal{Q}=\{(y_1, \dots, y_{n-1})\in \mathbb{R}^{n-1}: y_i\geq 0, 1\leq i\leq n-1, y_i+y_{i+1}+\dots + y_{i+t-1}\leq 0, 1\leq i\leq n-t, y_1+ \dots + y_{n-t}\geq (d-1)(t+1)-(n-t)\}.$
\end{center}
is not an integral polytope.
The vertices of $\mathcal{Q}^{*}$ are of the form $(a_1, a_2,\dots, a_{n-1})\in \mathbb{R}^{n-1}$ such that the hyperplane $H$ of equation $\sum_{i=1}^{n-1}a_iy_i=1$ has the property that $H\cap \mathcal{Q}$ is a facet of $\mathcal{Q}$. In other words, $H$ is a supporting hyperplane of $\mathcal{Q}$. As the hyperplane $\sum_{i=1}^{n-t}y_i=(d-1)(t+1)-(n-t)$, that is, $$\sum_{i=1}^{n-t}\frac{1}{(d-1)(t+1)-(n-t)}y_i=1$$ does not have integral coefficients, it follows that $\mathcal{Q}$ is not an integral polytope. Thus, by Theorem \ref{stdtype}, we conclude that $K[I_{n,d,t}]$ is not Gorenstein.
\end{proof}

\begin{Example}
Let $n=10$, $d=3$ and $t=2$. Then
$$
\mathcal{P}=\{(a_1, \dots, a_9)\in \mathbb{R}^{9}: a_i\geq 0, 1\leq i \leq 9\}, a_i+a_{i+1}\leq 1, 1\leq i\leq 8, a_1+\dots+a_8\geq 2 \}.
$$
For $\delta=3$, in $3\mathcal{P}$, there exists a unique interior lattice point, namely $(1,1,1,1,1,1,1,1)$. Let us compute $\mathcal{Q}=3\mathcal{P}-(1,1,1,1,1,1,1,1,1)$. We have
$$
\mathcal{Q}=\{(y_1, \dots, y_9)\in \mathbb{R}^{9}: y_i\geq -1, 1\leq i\leq 9, y_i+y_{i+1}\leq -1, 1\leq i\leq 8, y_1+\dots+y_8\geq -6 \},
$$
thus, the dual polytope $\mathcal{Q}^{*}$ is not integral. Therefore, the $2$-spread Veronese algebra $K[I_{10, 3,2}]$ is not Gorenstein.
\end{Example}

\begin{Theorem}\label{dt}
If $n=dt$, then $K[I_{n,d,t}]$ is Gorenstein.
\end{Theorem}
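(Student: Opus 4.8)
The plan is to exhibit $\mathcal{P}$, in the case $n=dt$, as a unimodular copy of the order polytope of a grid poset, and then to read off the Gorenstein property from the combinatorics of that poset.

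First I would record the shape of $\mathcal{P}$. By Theorem \ref{dimalg}(ii) we have $\dim K[I_{n,d,t}]=n-d+1$, so $\dim\mathcal{P}=d(t-1)$ and $\mathcal{P}$ is far from full dimensional inside the hyperplane $\sum_i a_i=d$. The reason is that the $d$ pairwise disjoint windows $[(j-1)t+1,jt]$, $1\leq j\leq d$, each satisfy $a_{(j-1)t+1}+\dots+a_{jt}\leq 1$ while their sizes add up to the total $d$; hence each of these block sums is forced to equal $1$. Thus the $d$ block-sum relations are equalities cutting out the affine hull of $\mathcal{P}$, and the only genuine inequalities that remain are $a_i\geq 0$ together with the \emph{straddling} windows $a_i+\dots+a_{i+t-1}\leq 1$ whose index range meets two consecutive blocks.

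Next I would pass to prefix-sum coordinates. For $1\leq j\leq d$ and $1\leq s\leq t-1$ set $S_j(s)=a_{(j-1)t+1}+\dots+a_{(j-1)t+s}$, with the conventions $S_j(0)=0$ and $S_j(t)=1$. The assignment $a\mapsto S$ is a unimodular change of coordinates on the affine lattice of the subspace $\{S_j(t)=1\}$, because the difference operator $a_{(j-1)t+s}=S_j(s)-S_j(s-1)$ and its inverse, prefix summation, are both integral. In these coordinates the facet inequalities become $0=S_j(0)\leq S_j(1)\leq\dots\leq S_j(t-1)\leq S_j(t)=1$ (from $a\geq 0$) and $S_{j+1}(s)\leq S_j(s)$ for $1\leq j\leq d-1$, $1\leq s\leq t-1$ (from the straddling windows). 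These are exactly the defining inequalities of the order polytope $\mathcal{O}(P)$ of the poset $P$ on the $d(t-1)$ elements $(j,s)$, ordered by declaring $(j,s)\preceq(j',s')$ exactly when $j\geq j'$ and $s\leq s'$; that is, $P$ is the product of a $d$-element chain and a $(t-1)$-element chain. Since $K[I_{n,d,t}]=K[\mathcal{P}]$ is normal and equal to its Ehrhart ring, I conclude $K[I_{n,d,t}]\cong\mathcal{A}(\mathcal{O}(P))$, the Hibi ring of the grid poset $P$.

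Finally I would invoke purity. A product of two chains is a graded poset: every maximal chain of $P$ has the same number $d+t-2$ of elements. Hibi's criterion states that the Hibi ring of a poset is Gorenstein precisely when the poset is pure, so $K[I_{n,d,t}]$ is Gorenstein. To keep the argument self-contained within the tools of the excerpt, one can instead finish through Theorem \ref{stdtype}: the least $\delta$ admitting an interior lattice point is $\delta=d+t-1$, and at that level gradedness forces the unique interior point to be the rank function $\alpha_{(j,s)}=d-j+s$, which is the step where purity is essential and which supplies the uniqueness demanded by Remark \ref{unique}. Translating by $\alpha$, the polytope $\mathcal{Q}=\delta\mathcal{P}-\alpha$ has its facets on hyperplanes $\langle e_{(j,s)}-e_{(j',s')},y\rangle=1$ coming from the cover relations and $\langle \pm e_{(j,s)},y\rangle=1$ coming from the minimal and maximal elements; all these normals are integral, so $\mathcal{Q}^{*}$ is integral and Theorem \ref{stdtype} again yields Gorensteinness. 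The main obstacle, and the crux of the whole argument, is verifying that the prefix-sum substitution is unimodular and that it carries the straddling-window inequalities exactly onto the grid-poset relations; once $\mathcal{P}$ is identified as the order polytope of a pure poset, Gorensteinness is immediate.
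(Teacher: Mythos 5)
Your argument is correct, and it takes a genuinely different route from the paper's. The paper works directly with the inequality description: it projects $\mathcal{P}$ into $\mathbb{R}^{n-1}$, shows by summing window inequalities that $\delta=t+d-1$, pins down the unique interior lattice point of $\delta\mathcal{P}$ by an explicit chain of inequalities $0<x_{(d-1)t+1}<\dots<x_{t+1}<x_1=d$, and then writes out $\mathcal{Q}=\delta\mathcal{P}-\alpha$ coordinate by coordinate to check that its supporting hyperplanes have integral normals. You instead exploit the special feature of $n=dt$ --- the $d$ disjoint windows partition $[n]$ and force all block sums to equal $1$ --- to recognize $\mathcal{P}$, after the unimodular prefix-sum substitution, as the order polytope of the grid poset $C_d\times C_{t-1}$; Gorensteinness is then Hibi's purity criterion for Hibi rings, or, staying within the tools of the paper, follows because the unique interior point at level $d+t-1$ is the rank function and the facets of the translated order polytope lie on hyperplanes $\langle e_{(j,s)}-e_{(j',s')},y\rangle=1$ and $\langle\pm e_{(j,s)},y\rangle=1$. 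Your route is less elementary (Hibi's criterion is outside the paper's toolkit, though your Theorem \ref{stdtype} fallback repairs that) but more conceptual: it explains \emph{why} $n=dt$ is Gorenstein (the polytope degenerates onto a pure poset's order polytope) and it yields extra structure for free. It also produces the interior point in the form ``block $j$ equals $(d-j+1,1,\dots,1,j)$,'' i.e.\ with the entry $j$ in the \emph{last} slot of the block; note that this disagrees with the paper's displayed point, which places $k+1$ in the second slot of block $k$, and for $t\geq 3$ your version is the one that actually satisfies the strict straddling-window inequalities (check $d=3$, $t=3$: the window $a_3+a_4+a_5$ equals $\delta$ for the paper's point but is $<\delta$ for the rank function). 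So your structural detour not only proves the theorem but corrects a slip in the explicit computation.
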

\begin{proof} In our hypothesis, by Theorem \ref{dimalg}, $\dim K[I_{n,d,t}]=n-d+1=d(t-1)+1$, thus $\dim \mathcal{P}=d(t-1)$. Using similar arguments as in Theorem \ref{n1},
\begin{center}
$\mathcal{P}= \{(a_1, a_2, \dots, a_{n-1})\in \mathbb{R}^{n-1}: a_i\geq 0, a_i+a_{i+1}+\dots+a_{i+t-1}\leq 1, 1\leq i\leq n-t, a_1+a_2+\dots+ a_{n-t}\geq d-1\}.$
\end{center}
\indent We show that the smallest integer $\delta\geq 1$ such that $\delta(\mathcal{P}-\partial \mathcal{P})$ contains lattice points is $t+d-1$. Assume that there are interior lattice points at lower levels than $t+d-1$. It is enough to show that there are no interior lattice points at level $t+d-2$. In this case, for each lattice point $(a_1, \dots, a_{n-1})\in (t+d-2)(\mathcal{P}- \partial \mathcal{P})$, we have $a_1+a_2+\dots+a_{n-t}> (d-1)(t+d-2)$. Since $a_{i}+a_{i+1}+\dots+a_{i+t-1}<t+d-2$, for any $1\leq i\leq n-t$, we obtain $a_1+a_2+\dots+a_{n-t}< (d-1)(t+d-2)$, which leads to contradiction. Thus $\delta \geq t+d-1$.\\
 \indent We show that $(t+d-1)\mathcal{P}$ contains a unique interior lattice point. For each lattice point $(a_1, a_2, \dots, a_{n-1})\in (t+d-1)\mathcal{P}$, we have $a_1+a_2+\dots +a_{n-t}\geq (d-1)(t+d-1)$. Since $a_i+a_{i+1}+\dots+a_{i+t-1}\leq t+d-1$, for any $1\leq i\leq n-t$, we obtain $a_1+a_2+\dots+ a_{n-t}\leq (d-1)(t+d-1)$, thus $a_1+a_2+\dots+ a_{n-t}=d+t-1$. Hence we obtain $a_{kt+1}+a_{kt+2}+\dots+ a_{kt+t}=t+d-1$, for any $0\leq k\leq d-2$. So, $a_{i}+a_{i+1}+\dots+a_{i+t-1}=t+d-1$, for any $1\leq i\leq t(d-1)$, with $i\equiv 1(\mod t)$. Since $a_{kt+1}+a_{kt+2}+\dots+a_{kt+t}=t+d-1$, for any $0\leq k\leq d-2$, we have
\begin{center}
$ a_{kt+2}= t+d-1-\sum_{ j\neq kt+2} a_j$, for any $0\leq k\leq d-2$.
\end{center}
Since $a_{kt+2}+a_{kt+3}+\dots+ a_{kt+t+1}\leq t+d-1$, for any $0\leq k\leq d-2$, we obtain
$$ (t+d-1) -\sum_{ j\neq kt+2} a_j+ a_{kt+3}+\dots +a_{(k+1)t+1}\leq (t+d-1).$$ Hence, $a_{(k+1)t+1}-a_{kt+1}\leq 0$, for any $0\leq k\leq d-2$, thus $a_{kt+t+1}\leq a_{kt+1}$, for any $0\leq k\leq d-2$. Therefore, for each lattice point $(x_1, x_2, \dots, x_{d(t-1)})\in (t+d-1)(\mathcal{P}-\partial \mathcal{P})$, we obtain $0<x_{(d-1)+1}<\dots< x_{t+1}< x_1<t+d-1$ and, since there are $d$ consecutive terms in this chain, we have $x_1\geq d$. If $x_1 >d$, and since each $x_i>1$, for any $1\leq i\leq n-1$, then $x_1+x_2+\dots+x_t >d+t-1$, which is a contradiction. Thus, $x_1=d$. Since $x_1+x_2+\dots + x_{t}=d+t-1$, $x_1=d$ and $x_i \geq 1$, for any $1\leq i\leq t$, we obtain $x_2=\dots=x_t=1$. Now, since $0<x_{(d-1)t+1}<\dots < x_{t+1}<x_1=d$ and $x_i+x_{i+1}+\dots+x_{i+t-1}<t+d-1$, we obtain $x_{kt+1}=d-k$, for any $0\leq k\leq d-2$ and $x_{kt+j}=1$, for any $0\leq k\leq d-2$ and $0\leq j\leq t, j\neq 1, j\neq 2$.
Therefore, $\alpha=(x_1, x_2, \dots, x_{d(t-1)})$, where
\begin{center}
$x_{j} = \begin{cases} d-k, & \mbox{if } j=kt+1, \mbox{with } 0\leq k\leq d-2, \\ 1,  &\mbox{if } j=kt+l,\mbox{with } 0\leq k\leq d-2, 0\leq l\leq t-1, l\neq 1, l\neq 2\end{cases}$
\end{center}
is the unique interior lattice point in $(t+d-1)\mathcal{P}$. But, for any $0\leq k\leq d-2$ and $kt+1\leq j\leq kt+t,$
 \begin{equation*}
\begin{split}
x_{kt+2} & = t+d-1-\sum_{j\neq kt+2}x_j\\
 & = t+d-1-(d-k+t-2)=k+1.
\end{split}
\end{equation*}
So, the unique interior lattice point $\alpha$ in $(t+d-1)\mathcal{P}$ is $(x_1, \dots, x_{n-1})$, where

 \[
x_{j}= \left\{
\begin{array}{lll}
      d-k, & j=kt+1, 0\leq k\leq d-2,\\
      k+1, & j=kt+2, 0\leq k\leq d-2, \\
      1, & j=kt+l, 0\leq k\leq d-2, 0\leq l\leq t-1, l\neq 1, l\neq 2.
\end{array}
\right.
\]
Using Theorem \ref{stdtype}, we show that $K[I_{n,d,t}]$ is Gorenstein. Let us compute $\mathcal{Q}=(t+d-1)\mathcal{P}-\alpha$. We have
 \begin{center}
$\mathcal{Q}=\{(y_1, \dots, y_{n-1})\in \mathbb{R}^{n-1}:  y_i= a_i-x_i, 1\leq i\leq n-1$, where $(a_1, a_2, \dots, a_{n-1})\in (t+d-1)\mathcal{P}\}.$
\end{center}
Thus, we obtain
\begin{center}
$\mathcal{Q}=\{(y_1, \dots, y_{n-1})\in \mathbb{R}^{n-1}:  y_i\geq -x_i, 1\leq i\leq n-1, y_i+y_{i+1}+\dots+y_{i+t-1}\leq t+d-1-(x_i+x_{i+1}+\dots+x_{i+t-1}), 1\leq i\leq n-t, y_1+\dots+y_{n-t}=0, y_{kt+1}+y_{kt+2}+\dots+ y_{kt+t}=0, 0\leq k\leq d-2\}.$
\end{center}
 For $1\leq i\leq n-t$, we have $y_i+y_{i+1}+\dots+y_{i+t-1}\leq t+d-1-(x_i+x_{i+1}+\dots+ x_{i+t-1})$, suppose $i=kt+r$, where $1\leq r\leq t$. Then $y_{kt+r}+y_{kt+r+1}+\dots + y_{(k+1)t+r-1}\leq t+d-1-(x_{kt+r}+\dots+ x_{(k+1)t+r-1})$, for any $0\leq k\leq d-2, 1\leq r\leq t$. If $r=1$, we already have $y_{kt+1}+\dots+ y_{kt+t}=0$. If $r=2$, then
$$
y_{kt+2}+y_{kt+3}+\dots+y_{(k+1)t+1}\leq (t+d-1)- (k+1+(t-2)+d-(k+1))=1.
$$
But $y_{kt+2}=-y_{kt+1}-\dots -y_{kt+t}$, thus
$$
y_{(k+1)t+1}-y_{kt+1}\leq 1.
$$
If $r\geq 3$, then
$$
y_{kt+r}+y_{kt+r+1}+\dots+ y_{(k+1)t+r-1}\leq (t+d-1)-(t-2+d-(k+1)+k+2)=0.
$$
But $y_{kt+r}=-y_{kt+1}-\dots y_{kt+r-1}-y_{kt+r+1}-\dots - y_{kt+t}$, thus
$$
y_{(k+1)t+1}+\dots +y_{(k+1)t+r-1}-y_{kt+1}-y_{kt+2}-\dots -y_{kt+r-1}\leq 0.
$$
Therefore,
\begin{center}
$\mathcal{Q}=\{(y_1, \dots, y_{n-1})\in \mathbb{R}^{n-1}: y_{(k+1)t+1}+\dots+y_{(k+1)t+r-1}-y_{kt+1}-\dots - y_{kt+r-1}\leq 0, 3\leq r\leq t, y_{(k+1)t+1}-y_{kt+1}\leq 1, y_{kt+2}=-y_{kt+1}-y_{kt+3}-\dots -y_{kt+t}, 0\leq k\leq d-2\}.$
\end{center}
Thus, since the supporting hyperplanes of the polytope $\mathcal{Q}$ have integral coefficients, we conclude that $\mathcal{Q}$ is an integral polytope. Hence, by Theorem \ref{stdtype}, $K[I_{n,d,t}]$ is Gorenstein.
\end{proof}

\begin{Example}
Let $n=10$, $d=5$ and $t=2$. In this case, $\delta=6$ and in the dilated polytope
 $6\mathcal{P}$ there is a unique interior lattice point, namely $(5,1,4,2,3,3,2,4,1)$. The dual polytope of $\mathcal{Q}=6\mathcal{P}-(5,1,4,2,3,3,2,4,1)$ is an integral polytope, thus $K[I_{10,5,2}]$ is Gorenstein.
\end{Example}

We state and prove the main theorem of this paper.
\begin{Theorem}\label{mainresult}
The $t$-spread Veronese algebra, $K[I_{n,d,t}]$, is Gorenstein if and only if $n\in \{(d-1)t+1, (d-1)t+2, dt, dt+1, dt+d\}.$
\end{Theorem}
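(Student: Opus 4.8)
The plan is to assemble the complete classification from the partial results above, organising the case analysis by the size of $n$ and applying the dual-polytope criterion of Theorem \ref{stdtype} throughout. First I would use the reduction recorded before Theorem \ref{dimalg} to pass to the range $n\ge dt$: when $(d-1)t+1\le n<dt$ only $n'=d(n-(d-1)t)$ of the variables occur and, after relabelling, $K[I_{n,d,t}]\cong K[I_{n',d,t'}]$ with $t'=n-(d-1)t$ and $n'=dt'$. The two sub-$dt$ values in the statement are then immediate: $n=(d-1)t+1$ gives a single generator, and $n=(d-1)t+2$ gives a polynomial ring by Corollary \ref{polyring}, so both algebras are Gorenstein.

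For $n\ge dt$ the dimension is $n$ by Theorem \ref{dimalg}, and the relevant object is the $(n-1)$-dimensional polytope $\mathcal{P}$ used in Theorem \ref{n1}. The value $n=dt$ is Gorenstein by Theorem \ref{dt}. The value $n=dt+d=(t+1)d$ I would settle by a variant of the computation in Theorem \ref{n2}: the minimal level is again $\delta=t+1$ and the unique interior lattice point of $(t+1)(\mathcal{P}-\partial\mathcal{P})$ is the all-ones vector $(1,\dots,1)$, but now the facet $\sum_{i=1}^{n-t}a_i=(d-1)(t+1)$ translates, after forming $\mathcal{Q}=(t+1)\mathcal{P}-(1,\dots,1)$, into $\sum_{i=1}^{n-t}y_i=-1$, whose supporting hyperplane has integer coefficients; since the remaining facets of $\mathcal{Q}$ are $-y_i=1$ and $y_i+\dots+y_{i+t-1}=1$, the dual $\mathcal{Q}^{*}$ is integral and $K[I_{n,d,t}]$ is Gorenstein. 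This is precisely the feature that fails once $n\ge(t+1)d+1$, where the constant $(d-1)(t+1)-(n-t)\le-2$ renders that hyperplane non-integral.

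The negative cases then close off the list: Theorem \ref{n1} shows that for $dt+2\le n\le dt+d-1$ the polytope $(t+d)(\mathcal{P}-\partial\mathcal{P})$ contains $d$ distinct interior lattice points, so $K[I_{n,d,t}]$ violates the necessary condition of Remark \ref{unique}, and Theorem \ref{n2} disposes of $n\ge(t+1)d+1$. Combining the positive and negative cases isolates exactly $n\in\{(d-1)t+1,(d-1)t+2,dt,dt+1,dt+d\}$.

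I expect the real obstacle to be the remaining value $n=dt+1$. One first checks that the minimal level is $\delta=t+d$ (the same level as in Theorem \ref{n1}) and that, in contrast to $dt+2\le n\le dt+d-1$, there is now a \emph{unique} interior lattice point $\alpha$, found by combining the spread inequalities $a_i+\dots+a_{i+t-1}\le\delta$ with the lower bound on $a_1+\dots+a_{n-t}$. The delicate point is that $\alpha$ has several coordinates equal to $2$, so the inequalities $a_i\ge0$ at those positions would shift to the non-integral $-\tfrac{1}{2}y_i=1$ for $\mathcal{Q}=\delta\mathcal{P}-\alpha$; the key observation is that these inequalities are \emph{not} facet-defining — on $\{a_i=0\}$ the bound $a_1+\dots+a_{n-t}\ge d-1$ forces an adjacent spread-block to be tight and the dimension to drop — so they contribute no vertex to $\mathcal{Q}^{*}$. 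Once this is seen, all genuine facets of $\mathcal{Q}$ have $\pm1$ normals, $\mathcal{Q}^{*}$ is integral, and Theorem \ref{stdtype} gives the Gorenstein conclusion. Verifying that this borderline value lands on the integral side, while its neighbours do not, is the heart of the argument.
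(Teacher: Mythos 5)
Your architecture matches the paper's: the five listed values are handled positively (single generator at $n=(d-1)t+1$, Corollary \ref{polyring} at $n=(d-1)t+2$, Theorem \ref{dt} at $n=dt$, and explicit dual-polytope computations at $n=dt+1$ and $n=dt+d$), while Theorems \ref{n1} and \ref{n2} dispose of $dt+2\le n\le dt+d-1$ and $n\ge (t+1)d+1$. Your sketch of the $n=dt+1$ case is also right in outline, with one slip: the unique interior point $\alpha$ of $(t+d)\mathcal{P}$ has its large coordinates equal to $d$ (namely $x_{kt+1}=d$ for $0\le k\le d-1$, all other coordinates $1$), not $2$; the paper then shows the translated inequalities $y_{kt+1}\ge -d$ are redundant because they follow from the facets $y_i+\dots+y_{i+t-1}\le 1$ together with $y_1+\dots+y_{n-t}\ge -1$, which is the dual-side version of your primal ``dimension drop'' remark.

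There is, however, a genuine gap in the ``only if'' direction, and your own first step exposes it. The reduction you invoke identifies $K[I_{n,d,t}]$ for $(d-1)t+1\le n<dt$ with $K[I_{n',d,t'}]$, where $t'=n-(d-1)t$ and $n'=dt'$; by Theorem \ref{dt} the latter is Gorenstein. Hence \emph{every} $n$ in the interval $(d-1)t+1\le n\le dt$ yields a Gorenstein algebra, not only the three values $(d-1)t+1$, $(d-1)t+2$, $dt$ of the list, and for $t\ge 4$ the interval contains values outside the list. Concretely, for $d=2$, $t=4$, $n=7$ one has $K[I_{7,2,4}]\cong K[I_{6,2,3}]$ after deleting the unused variable $x_4$; this algebra has $6$ generators and Krull dimension $5$, so its toric ideal is a height-one prime in a polynomial ring, hence principal, and the algebra is a hypersurface ring and therefore Gorenstein — yet $7\notin\{5,6,8,9,10\}$. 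So ``combining the positive and negative cases'' does not isolate the five listed values: neither your argument nor the paper's ever treats $(d-1)t+3\le n\le dt-1$, and no argument could, since those algebras are Gorenstein. The statement your proof strategy actually establishes is that $K[I_{n,d,t}]$ is Gorenstein if and only if $n\le dt+1$ or $n=dt+d$; the list as written is equivalent to this only when $t\le 3$, where the interval $[(d-1)t+1,\,dt]$ collapses onto the listed values. You should either reformulate the conclusion in that form or add the missing hypothesis on $t$.
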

\begin{proof}
\indent If $n=dt+k$ with $2\leq k\leq d-1$ and $n\geq (t+1)d+1$, then, by Theorem \ref{n1} and Theorem \ref{n2}, $K[I_{n,d,t}]$ is not Gorenstein. Hence, it remains to study the cases when $n\in \{(d-1)t+1, (d-1)t+2, dt, dt+1, dt+d\}.$\\
\indent If $n=(d-1)t+1$, then $K[I_{n,d,t}]$ is a polynomial ring, thus it is Gorenstein. If $n=(d-1)t+2$, by Theorem \ref{polyring}, $K[I_{n,d,t}]$ is Gorenstein. If $n=dt$, by Theorem \ref{dt}, we obtain the same conclusion.\\
\indent Let $\underline{n=dt+1}$. In our hypothesis, by Theorem \ref{dimalg}, $\dim K[I_{n,d,t}]=dt+1$, thus $\dim \mathcal{P}=dt$. Using similar arguments as in Theorem \ref{n1},
\begin{center}
$\mathcal{P}=\{(a_1, a_2, \dots, a_{n-1})\in \mathbb{R}^{n-1}: a_i\geq 0, 1\leq i\leq n-1, a_i+a_{i+1}+\dots+a_{i+t-1}\leq 1, 1\leq i\leq n-t, a_1+a_2+\dots+ a_{n-t}\geq d-1\}.$
\end{center}
We show that the smallest integer $\delta\geq 1$ such that $\delta (\mathcal{P}-\partial \mathcal{P})$ contains lattice points is $t+d$. Assume that there are interior lattice points at lower levels than $t+d$. It is enough to see that there are no interior lattice points at level $t+d-1$. The interior of the dilated polytope is
\begin{center}
$(t+d-1)(\mathcal{P}- \partial \mathcal{P})=\{(a_1, a_2, \dots, a_{n-1})\in \mathbb{R}^{n-1}: a_i> 0, 1\leq i\leq n-1, a_i+a_{i+1}+\dots+a_{i+t-1}< t+d-1, 1\leq i\leq n-t, a_1+a_2+\dots+ a_{n-t}> (d-1)(t+d-1)\}.$
\end{center}
In this case, for each lattice point $(a_1, a_2, \dots, a_{n-1})\in (t+d-1)(\mathcal{P}- \partial \mathcal{P})$, we have $a_i+a_{i+1}+\dots+a_{i+t-1}\leq t+d-2$, for any $1\leq i\leq n-t$, thus $a_1+a_2+\dots+ a_{(d-1)t}\leq (t+d-2)(d-1)$. But $a_1+a_2+\dots+ a_{n-t}\geq (d-1)(t+d)+1$, thus we obtain
\begin{center}
$(d-1)(t+d-1)+1\leq \sum_{i=1}^{n-t}a_i\leq (t+d-2)(d-1)+a_{(d-1)t+1},$
\end{center}
hence, $a_{(d-1)t+1}\geq d$. But, since $a_{(d-2)t+2}+a_{(d-2)t+3}+\dots + a_{(d-1)t+1}\leq t+d-2$, we obtain $a_{(d-2)t+2}+\dots+a_{(d-1)t}\leq t-2$, which is the sum of $t-1$ terms and each $a_{(d-2)t+j}>1$, for any $2\leq j\leq t$. We show that $(t+d)(\mathcal{P}-\partial \mathcal{P})$ contains only one lattice point. The interior of the dilated polytope is
\begin{center}
$(t+d)(\mathcal{P}- \partial \mathcal{P})=\{(a_1, a_2, \dots, a_{n-1})\in \mathbb{R}^{n-1}: a_i> 0, 1\leq i\leq n-1, a_i+a_{i+1}+\dots+a_{i+t-1}< t+d, 1\leq i\leq n-t, a_1+a_2+\dots+ a_{n-t}> (d-1)(t+d)\}.$
\end{center}
Let $(x_1, x_2, \dots, x_{n-1})\in (t+d)(\mathcal{P}- \partial \mathcal{P})\cap \mathbb{Z}^{n-1}$. Thus $x_1+x_2+\dots+ x_{(d-1)t+1}\geq (d-1)(t+d)+1$.
\begin{center}
{\it Claim:} $x_{kt+1}\geq d$, for any $0\leq k\leq d-1$.
\end{center}
Since $x_i+x_{i+1}+\dots + x_{i+t-1}\leq t+d-1$, for any $1\leq i\leq k-1$ and for any $k\leq i\leq d-2$, we obtain
$$ x_1+x_2+\dots +x_{kt}+x_{kt+2}+\dots +x_{(d-1)t+1}\leq (t+d-1)(d-1).$$
Hence,
$$ (d-1)(t+d)+1\leq \sum_{i=1}^{n-t}x_i\leq (d-1)(t+d-1)+x_{kt+1},$$
thus $x_{kt+1}\geq d$, for any $0\leq k\leq d-1$, as we claimed.\\
\indent But, $x_{kt+1}+x_{kt+2}+\dots+ x_{(k+1)t}\leq d+t-1$ and $x_{kt+1}\geq d, x_{kt+j}\geq 1$, for any $2\leq j\leq t$, thus $x_{kt+1}+x_{kt+2}+\dots+ x_{(k+1)t}= d+t-1$. The equality holds if and only if, for any $0\leq k\leq d-1$, $x_{kt+1}=d$ and $x_{kt+j}=1$, for any $2\leq j\leq t$. Therefore, $\alpha=(x_1, x_2, \dots, x_{n-1})$, where

\[
x_{j}= \left\{
\begin{array}{ll}
      d, & j=kt+1, 0\leq k\leq d-1\\
      1 & j=kt+l, 0\leq k\leq d-1, 0\leq l\leq t-1, l\neq 1.
\end{array}
\right.
\]
 is the unique interior lattice point in $(t+d)\mathcal{P}$. Using Theorem \ref{stdtype}, we show that $K[I_{n,d,t}]$ is Gorenstein. Let us compute $\mathcal{Q}=(t+d)\mathcal{P}-\alpha$.
 \begin{center}
$\mathcal{Q}=\{(y_1, \dots, y_{n-1})\in \mathbb{R}^{n-1}: y_i\geq -d, i=kt+1, 0\leq k\leq d-1 , y_i\geq -1, i=kt+l, 0\leq k\leq d-1, 0\leq l\leq t-1, l\neq 1, y_i+y_{i+1}+\dots + y_{i+t-1}\leq 1, 1\leq i\leq n-t, y_1+ \dots + y_{n-t}\geq -1\}.$
\end{center}
In fact, we show that the dual $\mathcal{Q}^{*}$ of $\mathcal{Q}$ is an integral polytope, by showing that the independent hyperplanes which determine the facets of $\mathcal{Q}$ are
\begin{equation*}
\begin{split}
 & y_i= -1, i=kt+l, 0\leq k\leq d-1, 0\leq l\leq t-1, l\neq 1,\\
 &  y_i+y_{i+1}+\dots + y_{i+t-1}= 1, 1\leq i\leq n-t,\\
 & y_1+ \dots + y_{n-t}= -1.
\end{split}
\end{equation*}
Thus, we need to show that all the hyperplanes $y_i= -d, i=kt+1, 0\leq k\leq d-1$ are redundant. Let $0\leq k\leq d-1$. Since $y_i+y_{i+1}+\dots+y_{i+t-1}\leq 1$, for any $1\leq i\leq k-1$ and $y_{it+2}+\dots +y_{(i+1)t+1}\leq 1$, for any $k\leq i\leq d-2$, we obtain
\begin{center}
$y_1+y_2+\dots + y_{(k-1)t+1}+ \dots+ y_{kt}+y_{kt+2}+\dots+y_{n-t}\leq k-1+[d-1-(l-1)]=d-1,$
 \end{center}
 and, since $y_1+y_2+\dots+ y_{(d-1)t+1}\geq -1$, we obtain $y_i\geq -d, i=kt+1$, for any $0\leq k\leq d-1$. Thus, since the supporting hyperplanes of the polytope $\mathcal{Q}$ have integral coefficients, we conclude that $\mathcal{Q}$ is an integral polytope. Hence, by Theorem \ref{stdtype}, $K[I_{n,d,t}]$ is Gorenstein.\\

\indent Let $\underline{n=dt+d}$. In our hypothesis, by Theorem \ref{dimalg}, $\dim K[I_{n,d,t}]=dt+d$, thus $\dim \mathcal{P}=dt+d-1$. We have,
\begin{center}
$\mathcal{P}=\{(a_1, a_2, \dots, a_{n-1})\in \mathbb{R}^{n-1}: a_i\geq 0, 1\leq i\leq n-1, a_i+a_{i+1}+\dots+a_{i+t-1}\leq 1, 1\leq i\leq n-t, a_1+a_2+\dots+ a_{n-t}\geq d-1\}.$
\end{center}
\indent We show that there are no interior lattice points at lower levels than $t+1$. It is enough to see that there are no interior lattice points at level $t$. Let $(a_1, a_2, \dots, a_{n-1})\in t(\mathcal{P}-\partial \mathcal{P})\cap \mathbb{Z}^{n-1}$. We have
\begin{center}
$t(\mathcal{P}-\partial \mathcal{P})=\{(a_1, a_2, \dots, a_{n-1})\in \mathbb{R}^{n-1}: a_i> 0, 1\leq i\leq n-1, a_i+a_{i+1}+\dots+a_{i+t-1}<t, 1\leq i\leq n-t, a_1+a_2+\dots+ a_{n-t}> (d-1)t\}.$
\end{center}
Since each $a_i >0$, for any $1\leq i\leq n-1$, we obtain $t>a_i+a_{i+1}+\dots+a_{i+t-1}\geq t$, which is a contradiction. Thus, there are no interior lattice points in the dilated polytope at lower levels than $t+1$. We show that $(t+1)(\mathcal{P}-\partial \mathcal{P})$ contains only one interior lattice point which has all the coordinates equal to $1$. The interior of the dilated polytope is
\begin{center}
$(t+1)(\mathcal{P}-\partial \mathcal{P})=\{(a_1, a_2, \dots, a_{n-1})\in \mathbb{R}^{n-1}: a_i> 0, 1\leq i\leq n-1, a_i+a_{i+1}+\dots+a_{i+t-1}<t+1, 1\leq i\leq n-t, a_1+a_2+\dots+ a_{n-t}> (d-1)(t+1)\}.$
\end{center}
 We know that, for each lattice point $(a_1, \dots, a_{n-1})\in (t+1)(\mathcal{P}-\partial \mathcal{P})$, we have $a_i+\dots +a_{i+t-1}\leq t$, for any $1\leq i\leq n-t$ and, since $a_j\geq 1$, for any $i\leq j\leq i+t-1$, we obtain $a_i+\dots+a_{i+t-1}\geq t$, thus we have equality which implies that $a_j=1$, for any $1\leq j\leq n-1$. Hence, $(1,1,\dots, 1)\in \mathbb{Z}^{n-1}$ is the unique interior lattice point in the dilated polytope $(t+1)\mathcal{P}$.\\
  \indent Let us consider $\mathcal{Q}=(t+1)\mathcal{P}-(1,1,\dots, 1)$. We will show that $K[\mathcal{P}]$ is Gorenstein by using Theorem \ref{stdtype}. In fact, we show that the dual $\mathcal{Q}^{*}$ of
 \begin{center}
$\mathcal{Q}=\{(y_1, \dots, y_{n-1})\in \mathbb{R}^{n-1}: y_i\geq 0, 1\leq i\leq n-1, y_i+y_{i+1}+\dots + y_{i+t-1}\leq 0, 1\leq i\leq n-t, y_1+ \dots + y_{n-t}\geq -1\}.$
\end{center}
is an integral polytope.
As the hyperplanes $\sum_{j=i}^{i+t-1}y_j=0$, for any $1\leq i\leq n-t$, and $\sum_{i=1}^{n-t}y_i=-1$, have integral coefficients, it follows that $\mathcal{Q}$ is an integral polytope. Thus, by Theorem \ref{stdtype}, we conclude that $K[I_{n,d,t}]$ is Gorenstein.
\end{proof}

\begin{Example}
Let $n=11$, $d=3$ and $t=4$. In this case, $\delta=5$ and in the dilated polytope
\begin{center}
 $5(\mathcal{P}-\partial \mathcal{P})=\{(a_1,\dots, a_{10}) \in \mathbb{R}^{10}: a_i>0, 1\leq i \leq10, i\neq 4, 8, a_9<a_5<a_1, a_9+a_{10}< a_5+a_6< a_1+a_2<5, a_4=a_8=0\}$
 \end{center}
 there is a unique interior lattice point, namely $(3,1,1,0,2,1,2,0,1,1)$. Let us compute the polytope $\mathcal{Q}=5\mathcal{P}-(3,1,1,0,2,1,2,0,1,1)$. Then
 \begin{center}
 $\mathcal{Q}=\{(y_1, y_2, \dots, y_{10})\in \mathbb{R}^{10}:y_i>-1, 1\leq i\leq 10, i\neq 4,8, y_9-y_5<1, y_5-y_1<1, y_9+y_10-y_5-y_6<1, y_5+y_6-y_1-y_2<1, y_1+y_2<1\}.$
 \end{center}
 Thus, the dual polytope of $\mathcal{Q}$ is integral. Therefore, $K[I_{11,3,4}]$ is Gorenstein.
\end{Example}

\begin{Example}
Let $n=10$, $d=3$ and $t=3$. In this case, $\delta=6$ and in the dilated polytope
\begin{center}
 $6(\mathcal{P}-\partial \mathcal{P})=\{(a_1,\dots, a_{9}) \in \mathbb{R}^{9}: a_i>0, 1\leq i \leq9,  a_1+a_2+a_3<6, a_2+a_3+a_4<6, a_3+a_4+a_5<6, a_4+a_5+a_6<6, a_5+a_6+a_7<6, a_6+a_7+a_8<6, a_7+a_8+a_9<6, a_1+a_2+\dots+a_7>12\}$
 \end{center}
 there is a unique interior lattice point, namely $(3,1,1,3,1,1,3,1,1)$. Let us compute the polytope $\mathcal{Q}=6\mathcal{P}-(3,1,1,3,1,1,3,1,1)$. Then
 \begin{center}
 $\mathcal{Q}=\{(y_1, y_2, \dots, y_{9})\in \mathbb{R}^{9}:y_2\geq -1, y_3\geq -1, y_5\geq -1, y_6\geq -1, y_8\geq -1, y_9\geq -1, y_1+y_2+y_3\leq 1, y_2+y_3+y_4\leq 1, y_3+y_4+y_5\leq 1, y_4+y_5+y_6\leq 1, y_5+y_6+y_7\leq 1, y_6+y_7+y_8\leq 1, y_7+y_8+y_9\leq 1, y_1+y_2+\dots+y_7\geq -1\}.$
 \end{center}
 Thus, the dual polytope of $\mathcal{Q}$ is integral. Therefore, $K[I_{10,3,3}]$ is Gorenstein.

\end{Example}

\begin{Example}
Let $n=8$, $d=2$ and $t=3$. In this case, $\delta=4$ and in the dilated polytope
\begin{center}
$4(\mathcal{P}-\partial \mathcal{P})=\{(a_1,\dots, a_{7}) \in \mathbb{R}^{7}: a_i>0, 1\leq i\leq 7, a_1+a_2+a_3<4, a_2+a_3+a_4<4, a_3+a_4+a_5<4, a_4+a_5+a_6<4, a_5+a_6+a_7<4, a_1+a_2+a_3+a_4+a_5>4\}$
\end{center}
there is a unique interior lattice point, $(1,1,\dots, 1)$. Let us compute the polytope $\mathcal{Q}=4\mathcal{P}-(1,1,\dots,1)$. Then
\begin{center}
$\mathcal{Q}=\{(y_1, y_2, \dots, y_7)\in \mathbb{R}^7: y_i>-1, 1\leq i\leq7, y_1+y_2+y_3<1, y_3+y_4+y_5<1, y_4+y_5+y_6<1, y_5+y_6+y_7<1, y_1+y_2+y_3+y_4+y_5>-1\}$.
\end{center}
Thus, the dual polytope $\mathcal{Q}^{*}$ is integral. Therefore, $K[I_{8,2,3}]$ is Gorenstein.
\end{Example}

\section*{Acknowledgement}
This paper was written while the author visited the Department of Mathematics of the University of Duisburg-Essen in July, $2018$. The author wants to express her gratitude to Professor J\"urgen Herzog for his entire support and guidance and to Professor Viviana Ene for useful discussions.
The author thanks European Mathematical Society and Doctoral School in Mathematics of the University of Bucharest for the financial support provided and gratefully acknowledges the use of the computer algebra system Macaulay2 (\cite{mac2}) for experiments.

\end{document}